\documentclass[10pt,a4paper]{amsart}
\usepackage{graphicx, amssymb, xcolor,pdfsync}
\usepackage[all,cmtip]{xy}
\numberwithin{equation}{section}
\usepackage{mathrsfs}
\usepackage{tikz}
\usepackage{tikz-cd}
\usepackage{faktor}
\usepackage{setspace}
\usepackage{multirow}
\usepackage{array}
\usepackage{comment}
\usepackage{hyperref}
\usepackage{cleveref}
\usepackage{graphicx}
\usepackage{yhmath}
\usepackage{mathdots}
\usepackage{MnSymbol}

\hypersetup{
    colorlinks=true,       
    linkcolor=blue,          
    citecolor=blue,        
    filecolor=blue,      
    urlcolor=blue           
}
\usepackage{tikz}
\usetikzlibrary{matrix,arrows}

\usepackage[switch]{lineno}
\usepackage{yfonts}

\advance\hoffset by -0.9 truecm

\newtheorem{theorem}{Theorem}[section]
\newtheorem{proposition}[theorem]{Proposition}

\newtheorem{lemma}[theorem]{Lemma}

\newtheorem{rema}[theorem]{Remark}

{\theoremstyle{definition}
}

\newcommand{\s}{\vspace{0.2cm}}

\begin{document}
\title[Triangular Riemann surfaces with $2p^2$ automorphisms]{A classification of triangular Riemann surfaces \\ with $2p^2$ automorphisms}
\author{Sebasti\'an Reyes-Carocca}
\address{Departamento de Matem\'aticas, Facultad de Ciencias, Universidad de Chile, Las Palmeras 3425,  Santiago, Chile.}
\email{sebastianreyes.c@uchile.cl}
\author{Yazmin Rivera Nene}
\address{Departamento de Matemáticas, Facultad de Matemáticas, Pontificia Universidad Católica de Chile, Avda. Vicuña Mackenna 4860, Santiago, Chile.}
\email{yrivera1@uc.cl}

\thanks{Partially supported by ANID Fondecyt Regular Grants 1260144 and 1230708, and  ANID Beca Doctorado Nacional  21210737}
\keywords{Riemann surfaces, automorphisms, algebraic curves}
\subjclass[2020]{14H30, 30F10, 14H37, 30F35, 14H57}

\begin{abstract} In this article we provide a classification and description of compact Riemann surfaces admitting a triangular action of a group of order $2p^2,$ where $p$ is an odd prime number. We obtain that all such Riemann surfaces are isomorphic to curves defined over the rational numbers. As a by-product, we derive a classification of orientably-regular hypermaps whose orientation-preserving automorphism group has order $2p^2.$
\end{abstract}
\maketitle
\thispagestyle{empty}
\section{Introduction}

The theory of compact Riemann surfaces  has long been a central research area in pure mathematics, serving to bridge diverse branches such as topology, geometry, algebra, combinatorics and complex analysis.  The origins of this subject can be traced  back to the nineteenth century, with seminal contributions by  Riemann,  Klein and  Hurwitz, among others.

\s

Throughout the development of the area, several significant progress has been achieved; however, two results stand out as fundamental milestones. The first one is the celebrated Riemann-Roch Theorem, which implies that the category of compact Riemann surfaces is equivalent to that of  smooth complex projective algebraic curves.  The second one is the so-called Uniformization Theorem, which implies  that compact Riemann surfaces of genus at least two  are isomorphic to quotients of the complex upper half-plane by the action of discrete groups of automorphisms of it, known as Fuchsian groups.

\s 

Further fundamental results concern automorphisms of compact Riemann surfaces. The finiteness of the automorphism group for genus $g \geqslant 2$ was established by Schwarz in \cite{Sch1879}, and it was Hurwitz in \cite{Hu} who showed that its order is bounded above by $84(g-1)$. Much later, Greenberg in \cite{Greenberg}  proved that every finite group arises as a group of automorphisms of some compact Riemann surface. This result introduced an additional ingredient into the theory, namely, the role played by finite groups and the  significance of their algebraic properties in the geometry of the surfaces on which they act.

\s

Let $\mathscr{M}_g$ denote the moduli space of isomorphism classes of compact Riemann surfaces of genus $g \geqslant 2,$ which is a complex analytic space of dimension $3g-3$. Group actions on compact Riemann surfaces are closely related with the singularities of $\mathscr{M}_g$ in the following sense: for $g \geqslant 4$ it is known that
$$\mbox{Sing}(\mathscr{M}_g)=\{[S] \in \mathscr{M}_g : \mbox{Aut}(S)  \mbox{ is non-trivial}\}.$$Moreover, according to \cite{B90} and \cite{GG92}, the locus consisting of all those Riemann surfaces admitting a {\it fixed type} of group action form an irreducible subvariety of $\mathscr{M}_g.$

\s

Greenberg’s realization theorem, together with the aim of understanding the challenging geometry and topology of $\mathscr{M}_g$ motivates a central problem in the classification of compact Riemann surfaces: {\it determine necessary and sufficient conditions under which a given abstract group can be realized as a group of automorphisms of some Riemann surface under prescribed constraints}. In this direction, several  approaches have been taken, including the following ones.

\s

{\bf a.} {\it Fixing the genus.} Several works address the problem of classifying Riemann surfaces of small genus  (see, for instance,   \cite{BCI45}, \cite{Brou2}, \cite{Marston}, \cite{Kara}, \cite{Ki1} and \cite{Shaska}), while others study group actions in genera of a special form (see, for instance, \cite{BJ}, \cite{IRC}, \cite{May} and \cite{anitayo}). 

\s

{\bf b.} {\it Fixing classes of groups.} Notable contributions include the case of cyclic, abelian and dihedral groups (see \cite{Harvey}, \cite{Macla} and \cite{Buja}). We also refer to \cite{Che}, \cite{Gro3}, \cite{Sch2} and \cite{Z1} for other classes of groups, such as solvable, supersolvable and nilpotent groups.

\s

{\bf c.} {\it Fixing the order of the group}. This case includes the study of Riemann surfaces of genus $g$  with groups of automorphisms of order $8g+8$ and $4g+2$, which yield the celebrated Accola-Maclachlan and Wiman curves; see \cite{K}.  See also \cite{BujalanceCostaIzquierdo} and \cite{CI5} for the cases of $4g$ and $4g+4$.

\s

Beyond the problem of describing the singular locus of the moduli space, the questions above are further motivated by the difficulty of determining the (full) automorphism group of a given Riemann surface or algebraic curve. A powerful tool in this setting, that we will employ in this paper, is the combinatorial  theory of Fuchsian groups and the study of inclusions among them, as developed by Singerman in \cite{Singerman} and \cite{Singerman70}.

\s

Group actions on Riemann surfaces are partially determined by their  signature. The tuple $$(h; m_1, \ldots, m_r) \in \mathbb{Z}^{r+1}  \mbox{ where }  h \geqslant 0 \mbox{ and } m_i \geqslant 2$$is called the {\it signature} of the action of a group $G$ on a compact Riemann surface $S$ if the genus of the orbit space $S/G$ is $h$ and the  branched regular covering map $S \to S/G$ ramifies over exactly $r$ values $y_1, \ldots, y_r$ and the fiber over $y_i$ consists of points with $G$-stabiliser group of order $m_i$ for each $i.$ If $h=0$ and $r=3$ then the action is called {\it triangular} and $S$ is called {\it quasiplatonic}.

\s

The point of view we consider in this article is the one of {\it fixing the type of signature}. More precisely, we consider triangular actions and quasiplatonic Riemann surfaces. The interest in this class of Riemann surfaces arises from several perspectives, some of them we mention below.

\s

{\bf a.} They enjoy the property that cannot be deformed analytically in the moduli space keeping their automorphism group. Roughly speaking, they cannot form families of positive dimension.  

\s

{\bf b.} The geometry of  them is particularly rich, as their analytic structure can be obtained from some bipartite graphs, called  {\it dessins d'enfants} in Grothendieck's terminology \cite{Grot}. 

\s
{\bf c.} By the celebrated Belyi Theorem, they are defined  over number fields (namely, they are isomorphic to curves defined by polynomials with coefficients in $\bar{\mathbb{Q}}$), and hence admit the action of the absolute Galois group.

\s

{\bf d.} Since curves and their Jacobian varieties can be defined over the same fields, these curves provide the natural setting in which to seek Jacobians with complex multiplication.

\s

The study of quasiplatonic Riemann surfaces has been considered by several authors. For instance, the case of low genera was investigated in \cite{low}, while the action of the Galois group  was addressed in \cite{gabinoandrei}. The case of abelian groups was studied in \cite{rubenhom} and \cite{qabelian}, where it was shown that such quasiplatonic Riemann surfaces are defined over the rational numbers. This result was extended in \cite{rubensemi} to certain semidirect products. See also \cite{sobreQ}. We refer the reader to \cite{Wolfart} for an excellent survey on the relationship between these surfaces and Jacobians with complex multiplication. On the other hand, the correspondence between quasiplatonic Riemann surfaces and orientably-regular hypermaps (or dessins d’enfants) has enriched their study through combinatorial methods and has made it possible to exploit advances in computer algebra systems for computer-assisted classifications (see, for instance, \cite{C09} and the references therein). We  refer to  \cite{JS94} and \cite{Wo97} for more details concerning these objects and the relations between them.

\s

A classification of all Riemann surfaces admitting a triangular action of a group of order $pq$, where $p$ and $q$ are prime numbers, was established by Streit and Wolfart in \cite{StreitWolfart}. In the spirit of this classification, for each prime $p \geqslant 3$ we investigate triangular actions of groups of order 
$2p^2$ on Riemann surfaces of genus at least two, and give a complete classification and description of them. The case considered here is more intricate than the $pq$ case, since not only do several distinct groups arise, but each such group may also admit more than one action. 

\s

The results of this article, that will be stated later, can be briefly summarised as follows.

\s

{\bf 1.} We prove that, up to isomorphism, there are precisely 
\begin{displaymath}
 \left\{ \begin{array}{cc}
 8 & \textrm{if  $p=3$}\\
 \frac{p^2+2p+3}{2} & \textrm{if $p \geqslant 5$}
  \end{array} \right.
\end{displaymath}compact Riemann surfaces admitting a triangular  action of a group of order $2p^2$.

\s

{\bf 2.} For each one of these Riemann  surfaces we compute its genus, determine its (full) automorphism group and describe the way this group acts.

\s

{\bf 3.} We observe that these Riemann surfaces are cyclic covers of the projective line, and  provide a description of them as explicit plane affine algebraic curves.  

\s

{\bf 4.} We deduce that each compact Riemann surface admitting a triangular action of a group of order $2p^2$ can be defined over the rational numbers. 

\s

{\bf 5.} We derive a classification of all those orientably-regular hypermaps or dessins d'enfants admitting a group of orientation-preserving automorphisms of order $2p^2.$

\s

Section \ref{preli} is devoted to briefly review the basic preliminaries and  introduce some notation. In Section \ref{s3} we determine the  signatures with which our groups might act. The results will be stated and proved in Sections  \ref{s5}, \ref{s6} and \ref{s4}. We end with a summary of the results in Section \ref{s7}.

\section{Preliminaries and Notations}\label{preli}

\subsection*{Fuchsian groups and Riemann surfaces}
A co-compact {\it Fuchsian group} is a discrete group of automorphisms $\Gamma$ of the upper half-plane $\mathbb{H}$ such that the orbit space $\mathbb{H}/\Gamma$ is compact. The {\it signature} of $\Gamma$ is defined as the tuple
\begin{equation}\label{sigg}s(\Gamma)=(h; m_1, \ldots,m_r),\end{equation}
where $h$ is the genus of $\mathbb{H}/\Gamma$ and the integers $m_1,\ldots,m_r$ are the branch indices in the universal covering map 
$\mathbb{H} \to \mathbb{H}/\Gamma$.  The algebraic structure  of $\Gamma$ is determined by its signature, as   $\Gamma$  has  canonical presentation given by
$$\Gamma= \langle a_1,b_1,\ldots,a_{h},b_{h},x_1, \ldots,x_r  \ | \ \Pi_{i=1}^{h}[a_i,b_i]\Pi_{j=1}^{r}x_j= x_1^{m_1}=\cdots=x_r^{m_r}=1   \rangle,$$
where $[a,b]=aba^{-1}b^{-1}.$ A Fuchsian group (and its signature) is called {\it triangular} if its  signature has the form $(0; m_1, m_2, m_3).$ Such groups  therefore admit the following canonical presentation
$$\Gamma=\Gamma_{(m_1, m_2, m_3)}=\langle x_1,x_2,x_3 \ |  \  x_1x_2x_3=  x_1^{m_1}=x_2^{m_2}=x_3^{m_3}=1 \rangle.$$
Throughout this article, we write $(m_1,m_2,m_3)$ instead of $(0;m_1,m_2,m_3)$.

\s

Fuchsian groups are intimately related with compact Riemann surfaces. Indeed, according to the Uniformization Theorem, if $S$ is  a compact Riemann surface of genus 
$g \geqslant 2$, then there is a Fuchsian group  $\Lambda$ of signature $(g; -)$ such that $S$ and $\mathbb{H}/\Lambda$ are isomorphic. Moreover, the following statements are equivalent:

\s

{\bf (a)} $G$ is a finite group acting on $S \cong \mathbb{H}/\Lambda$ with signature \eqref{sigg}.

{\bf (b)} There is a Fuchsian group $\Gamma$ of signature \eqref{sigg} and a  group epimorphism $\Phi: \Gamma \to G$ such that $\Lambda=\mbox{ker}(\Phi).$

\s

We say that the action of $G$ is {\it represented} by the {\it surface-kernel epimorphism} $\Phi.$ Hereafter, we write {\it ske} for short. Moreover, in such a case, the Riemann–Hurwitz formula can be 
expressed as
$$ 2g-2=|G|(2h-2+ \Sigma_{i=1}^r(1-\tfrac{1}{m_i} ) ).$$ If $G$ acts on $S$ with a triangular signature, then $S$ is called {\it quasiplatonic} and we say that $G$ acts {\it triangularly}. We refer to \cite{Farkas} and \cite{Girondo} for further details on Riemann surfaces and Fuchsian groups. 

\subsection*{Extending  group actions}
Let $S$ be a compact Riemann surface equipped with an action of a finite group $G$ represented by the ske $\Phi : \Lambda \to G$.
Let $G'$ be a finite group such that
$G \leqslant G'$. The action of $G$ on $S$ {\it extends} to an action of $G'$ if the following statements hold.

\s

{\bf (a)} There exists a Fuchsian group $\Gamma'$ with the same dimension that  $\Gamma$, such that $\Gamma \leqslant \Gamma'$.

{\bf (b)} There exists a ske  $\Phi': \Gamma' \to G'$  such that  $\Phi'|_{\Gamma}=\Phi$.

\s

Observe that if the action of  $G$ on $S$ extends, then there is an induced inclusion  of Fuchsian groups. In this way, the lists of all possible inclusions of Fuchsian groups given by Singerman in \cite{Singerman} plays a key role, as it imposes restrictions on the signature of the involved actions. Besides, for each potential extension,   Bujalance, Cirre and Conder in \cite{BCC}  established necessary and sufficient conditions for an action to extend; these conditions are given in terms of the ske representing the action.

\subsection*{Cyclic $n$-gonal actions} Let $n\geqslant 2$ be an integer and let $\omega_n$ be a primitive $n$-th root of unity. A compact 
Riemann surface $S$ is said to be  {\it cyclic 
$n$-gonal} if it admits an automorphism 
$\tau$ of order $n$ such that 
 $S/\langle \tau \rangle \cong \mathbb{P}^1.$ 
The Riemann surface $S$ admits a model as  algebraic curve of the form
\begin{equation}\label{ec-n-gonal}
   \mathscr{C}_S: y^n = \Pi_{i=1}^r (x - \alpha_i)^{n_i}
 \end{equation}
where $n_1, \ldots, n_r$ are positive integers such that
$1 \leqslant  n_i < n$, $n$ divides to $n_1 + \cdots + n_r$ and $\mbox{gcd}(n,n_1, \ldots, n_r) = 1$. In other words, $S$ is isomorphic to the normalisation of $\mathscr{C}_S.$ In addition, in this model the automorphism $\tau$ is represented by the map $(x,y) \mapsto (x,\omega_n y)$ and the values $\alpha_1, \ldots, \alpha_r$ are the branch values of the covering map $\mathscr{C}_S \to \mathbb{P}^1$ which admits $\langle \tau\rangle$ as its deck group. 
The genus of $\mathscr{C}_S$ (and consequently of $S$) is given by 
$$g=\tfrac{1}{2}( 2+ (r-2)n - \Sigma_{i=1}^{r} \mbox{gcd}(n_i,n) ),$$where gcd denotes the greatest common divisor.

Let $P$ be a fixed point of $\tau.$ Then, after considering an appropriate local chart centred in $P$, the automorphism $\tau$ looks locally like $z \mapsto \omega_{n}^kz$ for some $k \in \{1, \ldots, n-1\}.$
 The integer 
$k:=\mbox{rot}(P,\tau)$ is called the \textit{rotation  number} of  $\tau$ at $P$. 
If $k=1$ then  $P$ is called a {\it simple} fixed point of $\tau$.
Observe that  
$$\mbox{rot}(P,\tau^m)=\mbox{rot}(P,\tau)^m \mbox{ for } m\in \Bbb{Z} \,\,  \mbox{ and } \,\, \mbox{rot}(h(P),h\tau h^{-1})=\mbox{rot}(P,\tau) \mbox{ for } h\in \mbox{Aut}(S).$$
Furthermore, if $n=p$ is prime then the integers $n_i$ in \eqref{ec-n-gonal} can be taken as the inverses modulo $p$ of the rotation numbers of $\tau$ at each one of its fixed points. Further details on algebraic models of  
$n$-gonal Riemann surfaces can be found, for example, in
\cite{Broughton}, \cite{Broughton3}  and \cite[Section 2.5]{Girondo}. 
\s

{\bf Notation.} Along this paper, for each integer $n \geqslant 2$ we denote by 
$\mathbb{Z}_n$ and $\mathbb{D}_n$ the cyclic and dihedral groups of order $n$ and $2n,$ respectively. 

\section{Triangular signatures} \label{s3}
 Let $p \geqslant 3$ be a prime number. There are precisely five pairwise non-isomorphic groups of order $2p^2.$ In addition to the cyclic and dihedral ones, these groups are $\Bbb{Z}_{2p}\times \Bbb{Z}_p$, $ 
\Bbb{D}_{p}\times \Bbb{Z}_p$ and the semidirect product
$$\Bbb{Z}_{p}^2\rtimes \Bbb{Z}_{2}=\langle a,b,c \ | \ a^p=b^p=c^2=1, cac=a^{-1}, cbc=b^{-1}, [a,b]=1 \rangle.$$

\begin{lemma}\label{Lema1} Let $g\geqslant 2$ be an integer.

\s
{\bf (1)} $\mathbb{D}_{p^2}$ and $\Bbb{Z}_{p}^2 \rtimes \mathbb{Z}_{2}$ do not act triangularly in genus $g.$

{\bf (2)} If  $\Bbb{Z}_{2p}\times \Bbb{Z}_p$ acts triangularly in genus $g$ then the signature  is $(2p,2p,p)$. 

{\bf (3)} If  $\Bbb{D}_{p}\times \Bbb{Z}_p$ acts triangularly in genus $g$ then the signature is $( 2p,2p,p)$ or $(2,p,2p)$.

  {\bf (4)} If  $\Bbb{Z}_{2p^2}$ acts triangularly in genus $g$ then the signature  is $$(2,p^2, 2p^2), (2p^2, 2p^2, p), (2p^2, 2p^2, p^2) \mbox{ or } (2p, p^2, 2p^2).$$

\end{lemma}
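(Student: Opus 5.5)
By the equivalence recalled in Section \ref{preli}, a triangular action of a group $G$ of order $2p^2$ with signature $(m_1,m_2,m_3)$ exists exactly when there is a ske $\Phi:\Gamma_{(m_1,m_2,m_3)}\to G$. Such a $\Phi$ amounts to a generating pair $x=\Phi(x_1)$, $y=\Phi(x_2)$ of $G$ with $x$, $y$ and $(xy)^{-1}$ of orders $m_1,m_2,m_3$. The plan is therefore purely group-theoretic: for each of the five groups, find which triples of element orders can come from a generating pair with product relation. We discard triples with $\sum 1/m_i \geqslant 1$, since these give genus $\leqslant 1$ by the Riemann--Hurwitz formula. The signature is only determined up to permutation, because cyclic and inverse-type rearrangements of the generators realise all reorderings, so we may list the $m_i$ in any order.

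For \textbf{(1)}, the key observation is that in $\mathbb{D}_{p^2}$ and in $\Bbb{Z}_p^2\rtimes\Bbb{Z}_2$ every element outside the index-two abelian subgroup $N$ (namely $\Bbb{Z}_{p^2}$, respectively $\Bbb{Z}_p^2$) is an involution, and every element of $N$ has odd order. Since the quotient $G/N\cong\Bbb{Z}_2$ and $x_1x_2x_3=1$, an even number of the three generators map outside $N$. Generation forces at least one to lie outside $N$, so exactly two are involutions. The signature is then $(2,2,m)$, which is spherical, hence genus $0$, a contradiction. In the case $\Bbb{Z}_p^2\rtimes\Bbb{Z}_2$ one can also argue directly: two involutions generate a dihedral group, whose rotation subgroup is cyclic, but $N=\Bbb{Z}_p^2$ is not cyclic. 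Either way no triangular action with $g \geqslant 2$ exists.

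For \textbf{(2)}, $G=\Bbb{Z}_{2p}\times\Bbb{Z}_p$ is abelian of exponent $2p$ and needs two generators. Its element orders lie in $\{1,2,p,2p\}$. Projecting to the $\Bbb{Z}_2$ factor, an even number of $x,y,xy$ are nontrivial there, and at least one must be. Projecting to $\Bbb{Z}_p^2$, the images of $x,y$ must generate $\Bbb{Z}_p^2$, so the images of $x$, $y$ and $xy$ are all nontrivial. Hence two of the orders are $2p$ and the third is $p$, giving $(2p,2p,p)$.

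For \textbf{(3)}, $G=\Bbb{D}_p\times\Bbb{Z}_p$ has element orders in $\{1,2,p,2p\}$. Elements of order $2$ or $2p$ are exactly those with a reflection in the $\Bbb{D}_p$ coordinate, and again an even number, hence two, of $x,y,xy$ are of this form. The third element lies in $\Bbb{Z}_p\times\Bbb{Z}_p$ and must be nontrivial, so it has order $p$. The two reflection-type elements have orders in $\{2,2p\}$, and at least one must have order $2p$, since the $\Bbb{Z}_p$ coordinates must generate. This leaves $(2,2p,p)$ or $(2p,2p,p)$; the triple $(2,2,p)$ is excluded by the $\Bbb{Z}_p$ coordinate and is spherical anyway.

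For \textbf{(4)}, $G=\Bbb{Z}_{2p^2}$ is cyclic, and we work with its factors $\Bbb{Z}_2$ and $\Bbb{Z}_{p^2}$. In the $\Bbb{Z}_2$ factor, exactly two of the three elements are odd, so have orders among $2,2p,2p^2$; the remaining one has order dividing $p^2$, and it is nontrivial since otherwise the triple is $(2,2,1)$-like and degenerate. In the $\Bbb{Z}_{p^2}$ factor, at least one element is a generator; moreover if two of the three images lie in $p\Bbb{Z}_{p^2}$ then so does the third, so at least two of them are generators. Enumerating the possibilities: both odd elements are generators in $\Bbb{Z}_{p^2}$, giving $(2p^2,2p^2,p)$ or $(2p^2,2p^2,p^2)$. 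Alternatively, one odd element and the even one are generators, giving $(2,p^2,2p^2)$ or $(2p,p^2,2p^2)$. All of these are hyperbolic for $p\geqslant3$. The main obstacle is only bookkeeping: making sure every case and each ordering is covered, which this parity-plus-generation argument controls uniformly.
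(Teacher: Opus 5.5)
Your proof is correct and more self-contained than the paper's; the two arguments differ mainly in parts (1) and (4).

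For (1), the paper calls the dihedral case well known. For $\mathbb{Z}_p^2\rtimes\mathbb{Z}_2$ it first uses $g\geqslant 2$ to cut the possible signatures down to $(2,p,p)$ and $(p,p,p)$, then notes that two elements of order $p$ generate a subgroup of order at most $p^2$. You reverse the order: the parity map $G\to G/N\cong\mathbb{Z}_2$, together with the fact that every element outside $N$ is an involution, forces the shape $(2,2,m)$ for any generating triple, and only then do you apply Riemann--Hurwitz. This disposes of both groups at once.

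For (2), the paper checks which pairs of element orders can generate the abelian group; your projections onto $\mathbb{Z}_2$ and $\mathbb{Z}_p^2$ organise the same check more cleanly. For (3), the paper only says ``analogous'', while you write the argument out.

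For (4), the paper simply invokes Harvey's theorem on cyclic actions. You rederive the needed special case from $\mathbb{Z}_{2p^2}\cong\mathbb{Z}_2\times\mathbb{Z}_{p^2}$ and the fact that $\mathbb{Z}_{p^2}$ has a unique maximal subgroup. Your route needs no outside input. Harvey's criterion, being necessary and sufficient, also shows that all four signatures in (4) actually occur; the paper relies on this later, but the lemma itself does not assert it.

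Two cosmetic remarks on (4). The even element is nontrivial simply because a ske sends each $x_i$ to an element of exact order $m_i\geqslant 2$. Your second case should read ``exactly one odd element is a generator'', since the case in which all three are generators is already contained in your first case.
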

\begin{proof} 

The fact that dihedral groups do not act in genus $g$ triangularly is well-known. Assume that  
$G=\Bbb{Z}_{p}^2\rtimes \Bbb{Z}_{2}$ 
acts triangularly in genus $g$, and let $s=(m_1,m_2,m_3)$ be the 
signature of the action. Then there is a Fuchsian 
group $\Gamma$ of signature $s$ together 
with a ske $\Phi: \Gamma \to G$ representing the action. The surjectivity of $\Phi$ implies that  $G$ must 
be generated by two elements whose orders 
are two or $p.$ Now, the fact that $g \geqslant 2$ implies that the $s$ is either $(2,p,p)$ or $(p,p,p)$ and therefore $G$ can be generated by two elements of order $p.$ However,
two elements of order $p$ generate a 
subgroup of $G$ of order at most $p^2$, 
which yields a contradiction and item (1) follows. Now, assume that 
$G=\Bbb{Z}_{2p}\times \Bbb{Z}_p$ acts triangularly
in genus $g$ with signature $s=(m_1,m_2,m_3)$, and let $\Phi:\Gamma \to G$ be a ske representing such an action.  
Note that $G$ can be 
generated by two elements whose orders lie in the set $\{ 2,p, 2p\}$.
Since $G$ is abelian, it is clear 
that two elements of order $2$ do not generate 
the group. Similarly, neither do two elements of
order $p$, nor a pair consisting of one element 
of order $2$ and one of order $p$. The fact that the product of 
two elements of $G$ of order $2p$ has odd order  shows that   
$s=(2p,2p,p)$. The proof of statement (3) is analogous to the one of statement (2), whereas the last statement follows directly from \cite[Theorem 4]{Harvey}.

\s

\end{proof}


\section{Triangular actions of $\Bbb{Z}_{2p}\times \Bbb{Z}_p$}\label{s5}

Let $p\geqslant 3$ be a prime number. By Lemma \ref{Lema1}, the group $\Bbb{Z}_{2p}\times \Bbb{Z}_p$ can act triangularly in genus $g \geqslant 2$ with signature $(2p,2p,p)$ only.

\begin{theorem}\label{Tma1}
Let $p \geqslant 3$ be a  prime number. There exists a unique compact Riemann surface $X$ of genus $g \geqslant 2$ endowed with a group of automorphisms isomorphic to 
$\Bbb{Z}_{2p}\times \Bbb{Z}_p$ acting  triangularly.  Moreover, the following statements hold.

\s

{\bf (1)} The genus of $X$ is $g=(p-1)^2$.

{\bf (2)} $X$ is isomorphic to the normalisation of the curve given by $y^p=x^{2p}-1$.

{\bf (3)} The automorphism group of $X$ is isomorphic to 
$\Bbb{D}_p\times \Bbb{Z}_{2p}$ and acts with signature $(2p,2p,2)$. 

\end{theorem}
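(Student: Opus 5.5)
Write $G=\Bbb{Z}_{2p}\times\Bbb{Z}_p$. By Lemma \ref{Lema1} the signature is $(2p,2p,p)$, so any such action is given by a ske $\Phi:\Gamma_{(2p,2p,p)}\to G$, determined by the images $\Phi(x_1)=u$ and $\Phi(x_2)=v$. These must satisfy three conditions: $u,v$ have order $2p$, $uv$ has order $p$, and $\langle u,v\rangle=G$.

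\textbf{Step 1: genus.} The Riemann--Hurwitz formula gives
$$2g-2=2p^2\left(1-\tfrac{2}{2p}-\tfrac1p\right)=2p^2-4p,$$
so $g=(p-1)^2$. This proves (1), and $g\geqslant 2$ holds for $p\geqslant 3$.

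\textbf{Step 2: uniqueness.} Identify $G$ with $\Bbb{Z}_2\times\Bbb{Z}_p^2$. The element $u$ has the form $(1,\alpha)$ and $v$ has the form $(1,\beta)$, with $\alpha,\beta\in\Bbb{Z}_p^2$. The conditions say:
\begin{itemize}
\item $\alpha,\beta\neq 0$;
\item $\alpha+\beta\neq 0$ (this is the order-$p$ condition on $uv$, whose $\Bbb{Z}_2$-component is automatically $0$);
\item $\{\alpha,\beta\}$ is a basis of $\Bbb{Z}_p^2$ (generation).
\end{itemize}
Since a basis already forces the first two conditions, the admissible pairs are exactly the pairs of bases. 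The group $\mathrm{Aut}(G)\supseteq GL_2(p)$ acts transitively on bases. Hence all such ske are equivalent under $\mathrm{Aut}(G)$, and there is exactly one surface $X$ up to isomorphism. I do not even need the braid action here.

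\textbf{Step 3: the model in (2).} I will check directly that $C: y^p=x^{2p}-1$ carries the automorphisms
$$(x,y)\mapsto(\omega_{2p}x,y),\qquad (x,y)\mapsto(x,\omega_p y),$$
which generate $\Bbb{Z}_{2p}\times\Bbb{Z}_p$. The genus formula for cyclic $p$-gonal curves with $r=2p$ and all $n_i=1$ gives
$$g=\tfrac12\bigl(2+(2p-2)p-2p\bigr)=(p-1)^2.$$
The quotient map is $(x,y)\mapsto x^{2p}$, which branches over $0$, $1$ and $\infty$. So the action is triangular, and by uniqueness $X\cong C$.

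\textbf{Step 4: the full automorphism group in (3).} This is the main obstacle. The curve $C$ also has the involution $(x,y)\mapsto(1/x,\,\zeta y/x^2)$, where $\zeta^p=-1$. Together with the automorphisms above this gives a group of order $4p^2$ isomorphic to $\Bbb{D}_p\times\Bbb{Z}_{2p}$; I would verify the relations explicitly. Its signature follows from Riemann--Hurwitz:
$$2p^2-4p=4p^2\left(1-\tfrac{2}{2p}-\tfrac12\right)=2p^2-4p,$$
which is consistent with $(2p,2p,2)$, the index-two extension $(2p,2p,p)\subset(2p,2p,2)$ in Singerman's list \cite{Singerman}.

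Maximality is the delicate part. A triangular group of signature $(2p,2p,2)$ can only sit, among Singerman's inclusions, inside $(2,4,2p)$ with index $2$. That would give an automorphism group of order $8p^2$. I would rule it out in one of two ways:
\begin{itemize}
\item use the extension criterion of \cite{BCC}: an extension requires an automorphism of $\Bbb{D}_p\times\Bbb{Z}_{2p}$ swapping the two generators of order $2p$ with prescribed product behaviour, and I would show this is impossible;
\item or argue directly that the $\Bbb{Z}_p$ generated by $y\mapsto\omega_py$ is normal in the full group, since it is the unique $p$-gonal subgroup with quotient of genus $0$ for this genus. Then the full group is contained in the lift of the stabiliser in $\mathrm{PGL}_2$ of the $2p$ roots of $x^{2p}=1$, which is $\Bbb{D}_{2p}$, giving order at most $p\cdot 4p=4p^2$.
\end{itemize}
I expect the second argument, invoking uniqueness of the $p$-gonal group (Accola-type bound for $g>(p-1)^2$ fails here, so this needs care), to be the technically hardest point. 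The first argument is the fallback.
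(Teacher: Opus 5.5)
Steps 1--3 are correct. Step 2 is tidier than the paper's chain of normalising automorphisms: since $u^p$ is the involution of $G$, the admissible pairs are exactly the ordered bases of $\mathbb{Z}_p^2$, on which $\mathrm{Aut}(G)=GL_2(p)$ acts simply transitively. Your explicit extra automorphism gives $|\mathrm{Aut}(X)|\geqslant 4p^2$ without the Bujalance--Cirre--Conder extension criterion that the paper uses. Take $\zeta=-1$ there, since in general the map squares to $(x,y)\mapsto(x,\zeta^2y)$.

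The genuine gap is the upper bound in (3). Once Singerman's list is applied, the whole remaining content of (3) is to exclude an overgroup of order $8p^2$ with signature $(2,4,2p)$. You only announce two strategies and carry out neither. Your second route also rests on a claim the genus cannot supply. The surface $Z_0$ of Theorem~\ref{Tma3} has the same genus $(p-1)^2$ and two distinct $p$-gonal groups, interchanged by $U$. So uniqueness of the $p$-gonal group must be proved for this specific curve.

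Both routes can be closed quickly.

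\textbf{Second route.} Let $\sigma(x,y)=(\omega_{2p}x,y)$ and $\tau(x,y)=(x,\omega_p y)$, and let $H$ be your group of order $4p^2$. Suppose $|\mathrm{Aut}(X)|=8p^2$. Then $H$ is normal of index two, so its unique Sylow $p$-subgroup $P=\langle\sigma^2,\tau\rangle$ is normal in $\mathrm{Aut}(X)$ and contains every element of order $p$. By Riemann--Hurwitz a $p$-gonal automorphism of $X$ has exactly $2p$ fixed points. Now take $1\leqslant k\leqslant p-1$. The element $\sigma^{2k}\tau^l$ can fix points only over $x=0$, which requires $l\equiv 0$ mod $p$, or over $x=\infty$ (use the coordinate $y/x^2$ there), which requires $l\equiv 2k$ mod $p$. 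These conditions are incompatible, so it has at most $p$ fixed points. Hence $\langle\tau\rangle$ is the only $p$-gonal subgroup of $P$, so it is normal in $\mathrm{Aut}(X)$. Your bound via the stabiliser $\mathbb{D}_{2p}$ of the $2p$-th roots of unity then gives $|\mathrm{Aut}(X)|\leqslant 4p^2$, a contradiction. This version is self-contained and avoids the extension criterion entirely.

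\textbf{First route (the paper's).} The missing observation is that $G=C_H(P)$ is characteristic in $H$. In any ske $(g_1,g_2,g_3)$ of $H$ with signature $(2p,2p,2)$, exactly one of $g_1,g_2$ lies in $G$. If both did, they could not generate $H$. If neither did, $g_3\in G$ would be the unique involution of $G$, which is central in $H$, making $H=\langle g_1,g_3\rangle$ abelian. So no automorphism of $H$ can interchange $g_1$ and $g_2$, which is exactly what the criterion requires. The paper expresses the same obstruction by showing $\beta(C)=C$, which forces $\beta(CA^{-1}B^{-1})\in\langle A,B,C\rangle$.
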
  
\begin{proof}
Consider the group $\Bbb{Z}_{2p}\times \Bbb{Z}_p$ with the following presentation $$G=\Bbb{Z}_{2p}\times \Bbb{Z}_p =  \langle a,b,c \ | \ a^p=b^p=c^2= [a,b]=[a,c]=[b,c]=1 \rangle.$$ The non-trivial elements of $G$ are $c$, $a^nb^m$ and $ac^nb^m$ of orders $2$, $p$ and $2p$ respectively, where $0\leqslant n,m\leqslant p-1$ are not simultaneously zero.
Let $\Gamma$ be a Fuchsian group of signature $(2p,2p,p)$ with canonical presentation 
$$\Gamma=\langle x_1,x_2,x_3 \ | \  x_1x_2x_3= x_1^{2p}=x_2^{2p}=x_3^{p}=1   \rangle. $$Note that every ske $\Phi:\Gamma \to G$ representing the action of $G$ has the form 
$$\Phi(x_1,x_2,x_3)=(ca^{n_1}b^{m_1},ca^{n_2}b^{m_2},a^{n_3}b^{m_3}) \ \text{ for some }
\ 0\leqslant n_i,m_i\leqslant p-1,$$ 
where $n_1+n_2+n_3\equiv m_1+m_2+m_3\equiv 0$ mod $p$ and  $n_i$ and $m_i$ are not simultaneously zero. Without loss of generality, 
we can suppose  $n_3\neq 0$ and therefore, modulo the automorphism of $G$ given by 
$a\mapsto a^n, b\mapsto b, c\mapsto c$ where  $nn_3\equiv 1$ mod $p$, we have that $\Phi$ is
$G$-equivalent to 
\begin{equation}\label{G7-Tma1-ec1}
 (ca^{n_1}b^{m_1},ca^{n_2}b^{m_2},ab^{m_3}),   
\end{equation}
with $n_1+n_2+1\equiv m_1+m_2+m_3\equiv 0$ mod 
$p$. After applying the automorphism of 
$G$ defined by 
$a\mapsto ab^{-m_3}, b\mapsto b, c\mapsto c$, we have
that \eqref{G7-Tma1-ec1} is 
$G$-equivalent to $(ca^{n_1}b^{m_1},ca^{n_2}b^{-m_1},a)$, where $n_1+n_2+1\equiv 0$  mod $p$. 
Note that the surjectivity of $\Phi$ ensures that
$m_1\neq 0$ and therefore, by proceeding as before, it can be seen that
 $\Phi$ is
$G$-equivalent to 
$(ca^{n_1}b,ca^{n_2}b^{-1},a)$.
Finally, modulo the automorphism given by  
$a\mapsto a, b\mapsto a^{-n_1}b, c\mapsto c$, 
one sees  that $\Phi$ is $G$-equivalent to
$\Phi_0(x_1,x_2,x_3)=(cb,ca^{-1}b^{-1},a).$ Thus, up to isomorphism, there exists a unique Riemann surface 
$X\cong \Bbb{H}/\mbox{ker}(\Phi_0)$ endowed
 with a triangular action of $\Bbb{Z}_{2p}\times \Bbb{Z}_p$.
    By  the Riemann-Hurwitz formula, the genus of $X$ is $(p-1)^2$. Observe that the curve $y^p=x^{2p}-1$ has the same genus as $X$, and that the maps
$$(x,y) \mapsto (\omega_{2p}x,y) \, \mbox{ and }\, (x,y)\mapsto (x,\omega_py)   $$generate a group of automorphisms of it isomorphic to $G$ acting with signature $(2p,2p,p)$. Thus, the uniqueness ensures that 
$X$ is isomorphic to normalisation of $y^p=x^{2p}-1$.

We proceed to determine the automorphism group of $X.$ Observe that if the automorphism group of $X$ is not isomorphic to $G$ then the covering map $X/G \to X/\mbox{Aut}(X)$ yields an inclusion of $\Gamma$ into a larger  Fuchsian group.
According to the list of maximal signatures given by Singerman in \cite{Singerman}, the signature $(2p,2p,p)$ is not maximal, and we have the following inclusions 
\begin{equation*}\label{G7-Tma1-ec4}
 \Gamma=\Gamma_{(2p,2p,p)} \underset{2}{\trianglelefteq} \Gamma'=\Gamma'_{(2,2p,2p)}\underset{2}{\trianglelefteq} \Gamma''=\Gamma''_{(2,2p,4)} 
\end{equation*}
where $\Gamma''$ is maximal. Furthermore, $\Gamma$ is not contained in any other Fuchsian group. This shows that if $\mbox{Aut}(X)$ is not isomorphic to $G$ then one of the following statements hold.

\s

{\bf (a)} $\mbox{Aut}(X)$ has order $4p^2$ and acts with signature $(2,2p,2p)$.

{\bf (b)} $\mbox{Aut}(X)$ has order $8p^2$ and acts with signature $(2,2p,4)$. 

\s

Observe that the involution $\alpha$ of $G$ defined by
$\alpha(a)=a, \alpha(b)=a^{-1}b^{-1}$ and $\alpha(c)=c$ 
satisfies  
$\alpha(cb)=ca^{-1}b^{-1}.$ 
Then, according to  \cite[Case N8]{BCC}, the action of $G$ on $X$ extends to an action of 
$$G'=\langle A,B,C,W \ | \ A^P=B^P=C^2=W^2=1,\ldots, WAW=A,WBW=A^{-1}B^{-1},WCW=C \rangle$$
with signature $(2,2p,2p)$. Note that $G'= \langle AB^2,W \rangle \times \langle A,C \rangle \cong \Bbb{D}_p\times \Bbb{Z}_{2p}.$
In addition, if we write
$$\Gamma'=\langle y_1,y_2,y_3 \ | \  y_1y_2y_3=1, y_1^{2}=y_2^{2p}=y_3^{2p}=1   \rangle, $$
then the epimorphism $\Phi_0$ induces the ske $$\Phi':\Gamma' \to G' \text{ given by } \Phi'(y_1,y_2,y_3)=(W,CA^{-1}B^{-1},(WCA^{-1}B^{-1})^{-1}),$$
which, according to \cite{BCC}, represents the action of $G'$ on $X$.

Now, we study whether or not the action of $G'$ on $X$ extends. 
Since the surface $X$ is uniquely determined, we may assume the action
of $G'$ to be represented by $(CA^{-1}B^{-1},(W CA^{-1}B^{-1} )^{-1},W)$. Suppose that the action of $G'$ on $X$ extends. It follows from \cite[Case N8]{BCC} that there exists an involution $\beta$ of $G'$ satisfying
\begin{equation}\label{G7-Tma1-ec4-0}
 \beta(CA^{-1}B^{-1})=(WCA^{-1}B^{-1})^{-1} \text{ and }
\beta(W)=AB^{2}W.   
\end{equation}
We claim that  $\beta$ must have the form
\begin{equation}\label{G7-Tma1-ec4-1}
 \beta(A)=A^{n}B^m, \beta(B)=A^{i}B^j \text{ and } \beta(C)=C.
\end{equation} In fact, whereas the first two equalities in \eqref{G7-Tma1-ec4-1} are obvious, 
 the latter equality follows from fact that the elements of order $2$ in $G'$ are of the form $C$,  $WB^tA^k$ and $CWB^tA^k$ with $k$ such that 
$2k\equiv 1$ mod $p$, coupled with the fact that  $WB^tA^k$ and $CWB^tA^k$ 
do not commute with the image of $B$. It follows that \eqref{G7-Tma1-ec4-0} and \eqref{G7-Tma1-ec4-1} imply that $\beta(CA^{-1}B^{-1})=CA^{-n-i}B^{-m-j}=BACW$; a contradiction.  Consequently, the action of $G'$ on $X$ does not extend, and the proof is done.
\end{proof}

\section{Triangular actions of $\Bbb{D}_{p}\times \Bbb{Z}_p$} \label{s6}

 Let  $p\geqslant 3$ be a prime number. According to Lemma \ref{Lema1}, the group $$G=\Bbb{D}_{p}\times \Bbb{Z}_p = \langle a,b,c \ | \ a^p=b^p=c^2=1, cac=a^{-1}, [a,b]=[b,c]=1 \rangle,$$ may act triangularly in genus $g \geqslant 2$  with  signatures $(2,p,2p)$ and $(2p,2p,p)$ only. We study these signatures separately.
For later use, we recall that the non-trivial elements of $G$ are  $ca^k$, $a^nb^m$ and $ca^kb^j$ (of order $2, p$ and $2p$ respectively)  where $0\leqslant k,m,n\leqslant p-1$, $1\leqslant j\leqslant p-1$ and $n$ and $m$ are not simultaneously zero.

\subsection{Actions of signature $(2,p,2p)$}
\begin{theorem}\label{Tma2}
Let $p \geqslant 5$ be a prime number.
There exists a unique  compact Riemann surface $Y$ of genus $g \geqslant 2$ endowed with a group of automorphisms isomorphic to $\Bbb{D}_p\times \Bbb{Z}_p$ acting  with signature $( 2,p,2p)$. 
Moreover, the following statements hold.

\s

{\bf (1)} The genus of $Y$ is $g=\frac{(p-2)(p-1)}{2}$.

{\bf (2)} The automorphism group of $Y$  is isomorphic to the semidirect product $\mathbb{Z}_p^2 \rtimes \mathbb{D}_3$ with presentation 
    \begin{multline*}
\langle A,B,C,W \ | \ A^p=B^p=C^2=W^3=1, \ [A,B]=1, \  WBW^{-1}=A \\
WAW^{-1}=(BA)^{-1}, \ CBC=A, \ CAC=B, \ CWC=W^{-1} \rangle, 
\end{multline*} 
and acts on $Y$ with signature $(2,3,2p)$.

{\bf (3)} $Y$ is isomorphic to the normalisation of the affine algebraic curve $y^p=(x^p-1)^{2}.$

\end{theorem}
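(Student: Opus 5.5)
I will follow the same scheme as in the proof of Theorem \ref{Tma1}: classify ske's up to $G$-equivalence, identify the surface with an explicit curve, and then use Singerman's list together with \cite{BCC} to determine the full automorphism group.

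\emph{Classification of ske's.} Let $\Gamma=\langle x_1,x_2,x_3 \mid x_1x_2x_3=x_1^2=x_2^p=x_3^{2p}=1\rangle$. Any ske $\Phi:\Gamma\to G$ has the form $\Phi(x_1,x_2,x_3)=(ca^{k},a^nb^m,(ca^ka^nb^m)^{-1})$. The third image must have order $2p$, which forces $m\neq 0$. The image must also be all of $G$, which forces $n\neq 0$, since otherwise the image lies in $\langle ca^k,b\rangle$.

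These parameters can now be normalised by automorphisms of $G$. The map $a\mapsto a$, $b\mapsto b$, $c\mapsto ca^{-k}$ makes $k=0$. The map $a\mapsto a^{n^{-1}}$ makes $n=1$. Finally, $b\mapsto b^{m^{-1}}$ makes $m=1$. Hence every ske is equivalent to $\Phi_0=(c,ab,(cab)^{-1})$, and the surface $Y$ is unique.

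The Riemann--Hurwitz formula gives
\[
2g-2=2p^2\bigl(1-\tfrac12-\tfrac1p-\tfrac1{2p}\bigr)=p^2-3p,
\]
so $g=(p-1)(p-2)/2$. This proves (1).

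\emph{The model.} Consider $y^p=(x^p-1)^2$. Here $r=p$, $n_i=2$ and $\gcd(2,p)=1$, so the genus formula gives the same genus $(p-1)(p-2)/2$. This curve admits the automorphisms $(x,y)\mapsto(\omega_px,y)$ and $(x,y)\mapsto(x,\omega_py)$. It also admits an involution $(x,y)\mapsto(1/x,\ y\,\omega/x^2)$ with a suitable constant $\omega$; this needs a check that $(x^{-p}-1)^2=x^{-2p}(x^p-1)^2$. These maps generate a copy of $\mathbb{D}_p\times\mathbb{Z}_p$ whose quotient is $\mathbb{P}^1$ with three branch values, and I would verify that the signature is $(2,p,2p)$. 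Uniqueness of $Y$ then gives (3).

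A cleaner route to the model is to note that $y^p=(x^p-1)^2$ is birational to the Fermat-type quotient $u^p+v^p=1$ twisted by a $\mathbb{Z}_p$ character. That description also explains the larger symmetry group.

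\emph{Automorphism group.} By Singerman's list, $(2,p,2p)$ is not maximal for $p\geqslant5$. The only triangular group containing $\Gamma_{(2,p,2p)}$ is $\Gamma_{(2,3,2p)}$, with index $3$; this is a non-normal inclusion in Singerman's list. For $p=3$ this case is excluded because of additional coincidences, which is why the theorem assumes $p\geqslant 5$.

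Hence either $\mathrm{Aut}(Y)=G$, or $|\mathrm{Aut}(Y)|=6p^2$ with signature $(2,3,2p)$. To show that the extension occurs, I would exhibit it concretely on the Fermat-like model, where an order-$3$ automorphism permutes the three branch points $0,1,\infty$ of $\mathbb{P}^1$. Alternatively, I would verify the condition in \cite{BCC} for the index-$3$ case applied to $\Phi_0$.

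It remains to identify the group. $\mathrm{Aut}(Y)$ has a normal subgroup $\langle A,B\rangle\cong\mathbb{Z}_p^2$, namely the kernel of the action on the Fermat curve quotient, and the quotient is $\mathbb{D}_3$. Writing down the generators yields the stated presentation. Since $(2,3,2p)$ is maximal, no further extension is possible.

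\emph{Main obstacle.} I expect the hardest step to be the index-$3$ extension. Checking the precise \cite{BCC} criterion, or writing an explicit order-$3$ automorphism of $y^p=(x^p-1)^2$ and matching it to the given presentation, seems the most delicate part. I would first try the explicit Fermat-curve realisation: $Y$ should be the quotient of the Fermat curve of degree $p$ by a subgroup of $\mathbb{Z}_p^2$ invariant under the $S_3$-symmetry. That realisation gives the $\mathbb{Z}_p^2\rtimes\mathbb{D}_3$ action directly.
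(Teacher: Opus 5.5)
Your ske normalisation, the genus computation and the identification of $Y$ with $y^p=(x^p-1)^2$ are sound. The last of these differs from the paper: the paper derives the equation from the $p$-gonal quotient $Y\to Y/\langle b\rangle$ via rotation numbers, whereas you exhibit the action directly on the curve. Your route works with $\iota(x,y)=(1/x,y/x^2)$ (so $\omega=1$). This $\iota$ inverts $(x,y)\mapsto(\omega_px,\omega_py)$, not $(x,y)\mapsto(\omega_px,y)$, and centralises $(x,y)\mapsto(x,\omega_py)$. Riemann--Hurwitz then leaves $(2,p,2p)$ as the only possible signature.

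The genuine gap is statement (2). Singerman's list only gives the dichotomy $\mathrm{Aut}(Y)=G$ or $|\mathrm{Aut}(Y)|=6p^2$. The content of (2) is that the action really extends through the non-normal index-$3$ inclusion, and that the resulting group is the one presented. You carry out neither of the two routes you mention, and ``writing down the generators yields the stated presentation'' is an assertion, not an argument.

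The route you favour is also misdescribed. $Y$ cannot be the quotient of the degree-$p$ Fermat curve by a nontrivial subgroup, since both have genus $(p-1)(p-2)/2$. In fact $(u,v)\mapsto(u,v^2)$ maps $u^p+v^p=1$ birationally onto $y^p=(x^p-1)^2$, with inverse $v=y^{(p+1)/2}/(1-x^p)$, so $Y$ is the Fermat curve itself. Also, $p=3$ is excluded because $(2,3,6)$ is Euclidean and the genus would be $1$, not because of extra inclusions in Singerman's list.

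The paper closes the gap without any extendability criterion. It takes the abstract group $G'$ of the statement and checks that $\Phi'(y_1,y_2,y_3)=(C,WA,CWAB)$ is a ske $\Gamma_{(2,3,2p)}\to G'$. It then restricts $\Phi'$ along the explicit embedding $\eta(x_1,x_2,x_3)=(y_2y_1y_2^{-1},y_2^{-1}y_3^2y_2,y_3)$ of $\Gamma_{(2,p,2p)}$. The image is $\langle A,B,CW\rangle\cong\mathbb{D}_p\times\mathbb{Z}_p$, so the surface defined by $\Phi'$ carries a $(2,p,2p)$-action of $G$ and hence is $Y$ by uniqueness. Maximality of $(2,3,2p)$ then gives $\mathrm{Aut}(Y)\cong G'$.

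Your geometric route also closes once corrected. On $u^p+v^p+w^p=0$ the diagonal automorphisms form $\mathbb{Z}_p^2$ and the coordinate permutations give $S_3\cong\mathbb{D}_3$. Hence $|\mathrm{Aut}(Y)|\geqslant 6p^2$, and the dichotomy forces equality. To match the presentation, take $A,B$ to scale the first and second coordinates, $C$ the transposition of these two, and $W$ a suitable $3$-cycle. Then $(BA)^{-1}$ becomes the scaling of the third coordinate modulo scalars, and the relations of the presentation hold.
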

\begin{proof}
Consider a Fuchsian group $\Gamma$ of signature 
$(2,p,2p)$  with canonical presentation 
\begin{equation*}\label{prese2.p.2p}\Gamma=\langle x_1,x_2,x_3 \ | \ x_1x_2x_3= x_1^{2}=x_2^{p}=x_3^{2p}=1   \rangle.\end{equation*}
Every action of $G$ with signature $(2,p,2p)$ is represented by a ske
$$\Phi(x_1,x_2,x_3)=(ca^{n_1},a^{n_2}b^{m_2},ca^{n_3}b^{m_3}) \ \text{ for some }
   0\leqslant n_i\leqslant p-1, 1\leqslant m_i\leqslant p-1$$ 
such that $-n_1-n_2+n_3\equiv m_2+m_3\equiv 0$ mod $p$.
By proceeding in a similar way to Theorem \ref{Tma1}, it can be see that $\Phi$ is $G$-equivalent 
to $\Phi_1(x_1,x_2,x_3)=(c,ab^{-1},cab),$ and the uniqueness of $Y$ follows. The genus of $Y$ follows from the Riemann–Hurwitz. According to  \cite{Singerman}, the signature $(2,p,2p)$ is not maximal and admits a unique inclusion (which is non-normal), given by
\begin{equation*}\label{G8-Tma1-ec3}
\Gamma=\Gamma_{(2,p,2p)}\underset{3}{\leq} \Gamma'=\Gamma'_{(2,3,2p)}
\end{equation*}
Thus, the automorphism group of $Y$ is isomorphic either to  $G$ or to a group of order $6p^2$. In the latter case, the automorphism  group acts with signature $(2,3,2p).$ We now proceed to prove that the action of $G$ extends to an action of the group
$G'=\Bbb{Z}_p^2\rtimes \Bbb{D}_3$
 with presentation given in the statement {\bf (2)} of the theorem. Write
$$\Gamma'=\langle y_1,y_2,y_3 \ | \ y_1y_2y_3= y_1^{2}=y_2^{3}=y_3^{2p}=1   \rangle.$$It is straightforward to verify that the group homomorphism
$\Phi':\Gamma'\to G'$  given by   $\Phi'(y_1,y_2,y_3)=(C,WA,CWAB)$
is a ske, showing that $G'$ acts on a compact Riemann surface $Y'$ with signature $(2,3,2p)$. The maximality of $\Gamma'$ guarantees that $\mbox{Aut}(Y')\cong G'.$

\s

{\it Claim.} $Y \cong Y'$ and therefore $\mbox{Aut}(Y)\cong G'$.

\s
Consider the inclusion  
$\eta:\Gamma\to \Gamma'$ given by $\eta(x_1,x_2,x_3)=(y_2y_1y_2^{-1},y_2^{-1}y_3^2y_2,y_3)$ and notice that $\Phi'|_{\Gamma}:\Gamma \to \mbox{Im}(\Phi'|_{\Gamma}) \leqslant G'$ is given by 
$$\Phi'|_{\Gamma}(x_1,x_2,x_3)=(WACA^{-1}W^{-1},A^{-1}W^{-1}(CWAB)^2WA,CWAB).$$ A routine computation shows that $\mbox{Im}(\Phi'|_{\Gamma})=\langle A, B, CW \rangle=
\langle A, B, CW \rangle \cong  G= \Bbb{D}_p\times \Bbb{Z}_p,$ and hence $G$ acts on $Y'$ with signature $(2,p,2p)$. The claim follows from the uniqueness of $Y$. 

\s

It only remains to give an algebraic description of $Y.$ To accomplish this task, consider  the regular covering map 
$\pi : Y \to Y/G$ induced by the action of $G$ on $Y$,
and let $a_1, a_2$ and $a_3$ be its ordered branch values. 
Let $N \cong \mathbb{Z}_p$ be the normal subgroup of $G$ generated by $b$ and 
denote by $\pi_N : Y \to Y/N$ the regular covering map 
induced by the action of $N$ on $Y$. The $N$-stabilisers of the branch points of $\pi$ are trivial, except for those forming the $\pi$-fibre of $a_3$ (they are $p$  points with $G$-stabiliser conjugate to
$\langle cab \rangle=\langle ca \rangle \times \langle b\rangle$). 
Thus,  $\pi_N$  has precisely  $p$ branch values of all of 
them marked with $p$. By the Riemann–Hurwitz 
formula, the quotient $Y/N$ has genus zero, showing that $Y$ is  cyclic $p$-gonal. Thus, $Y$ admits an algebraic model  
given  by the curve
\begin{equation}\label{G8-Tma1-ec7}
    y^{p}=\Pi_{i=1}^{p}(x-\alpha_i)^{n_i},
\end{equation}
where $\alpha_1,\ldots, \alpha_p$ are the branch values of
$\pi_N$ and $n_1, \ldots, n_p$ are integers in $\{1, \ldots, p-1\}.$ Let $\hat{a}$ be the automorphism of $Y/N$ induced by $a.$ The action of $\langle \hat{a} \rangle  \cong  \Bbb{Z}_p$ has
two fixed points that, up to a Möbius transformation, we  can assume to be $\infty$ and $0$. Thus, 
$\hat{a}(z) = \omega_p z$ and  the $p$ branch 
values of $\pi_N$ can be  assumed 
to be the $p$-th roots of unity.  Thus, \eqref{G8-Tma1-ec7} turns into
\begin{equation}\label{G8-Tma1-ec8}
    y^p=(x-1)^{n_1}(x-\omega_p)^{n_2}\cdots(x-\omega_p^{p-1})^{n_p}.
\end{equation}
To determine the values $n_1, \ldots, n_p$,  we make use of the rotation numbers. 
We recall that there is $P \in \pi^{-1}(a_3)$ such that $P$ is a primitive simple fixed point of $cab$, namely,
$\mbox{rot}(P,cab)=\omega_{2p}$. This implies that $\mbox{rot}(P,b^2)=\omega_{p}$ and  $\mbox{rot}(P,b)=\omega_{p}^{\beta}$ where $2\beta \equiv 1$  mod $p$. Now, if $Q \in \pi^{-1}(a_3)$ then $Q=g(P)$ for some $g \in G,$ and therefore  
$$\mbox{rot}(Q,b)=\mbox{rot}(g(P),b)=\mbox{rot}(P,g^{-1}bg)=\mbox{rot}(P,b)=\omega_p^{\beta}.$$
We then conclude that $n_i= 2$ for each $1 \leqslant i \leqslant p$ and the proof is done.
\end{proof}

\subsection{Actions of signature $(2p,2p,p)$}

Let $\Gamma$ be a Fuchsian group of signature $(2p,2p,p)$ with canonical presentation
\begin{equation*} \label{prese.2p.2p.p}\Gamma=\langle x_1,x_2,x_3 \ | \  x_1x_2x_3= 
x_1^{2p}=x_2^{2p}=x_3^{p}=1 \rangle.\end{equation*}
Consider the ske
$$\Phi_{j}: \Gamma\to G \ \text{given by} \ \Phi_{j}(x_1,x_2,x_3)= (cb, ca^{-1}b^{-1-j},ab^{j})$$
where $0\leqslant j\leqslant p-2$, and
 denote by $Z_j$ the Riemann surface defined by $\Phi_j$.

\begin{theorem}\label{Tma3}
Let $p \geqslant 3$ be a prime number. There are exactly $\frac{p+1}{2}$ pairwise non-isomorphic compact Riemann surfaces of genus $g \geqslant 2$ admitting a group of automorphisms isomorphic to   $\Bbb{D}_p\times \Bbb{Z}_p$ 
acting with signature $(2p,2p,p)$. Moreover, the following statements hold.  

\s

{\bf (1)} These Riemann surfaces are $Z_0, \ldots, Z_{p-2}$ up to the following identification: for $i \neq j \in \{1, \ldots, p-3\}$
$$Z_i \cong Z_j \mbox{ if and only if } i+ij+j \equiv 0 \mbox{ mod }p.$$

 {\bf (2)} The genus of each $Z_j$ is $g=(p-1)^2$.

 {\bf (3)} The automorphism group of $Z_j$ is isomorphic to 
 \begin{displaymath}
 \left\{ \begin{array}{cc}
\Bbb{D}_{p}^2\rtimes \Bbb{Z}_2 & \textrm{if  $j=0$}\\
 \Bbb{D}_p\times \Bbb{Z}_{2p} & \textrm{if $j=p-2$}\\
 \Bbb{D}_p\times \Bbb{Z}_p & \textrm{otherwise}
  \end{array} \right.
\end{displaymath}where
the former semidirect product  has presentation 
 
\begin{multline*}
\langle A,B,C,W,U \ | \ A^p=B^p=C^2=W^2=U^2=1, \ [A,B]=[A,W]=[B,C]=[C,W]=1, \\ CAC=A^{-1}, WBW=B^{-1}, UAU=B^{-1},UBU=A^{-1}, UCU=WB^2,UWU=CA^2 \rangle .  
\end{multline*}The signature of the action of the automorphism group is
 $(2,2p,4)$ if $j=0$ and $(2p,2p,2)$ if $j=p-2.$

{\bf (4)} $Z_{j}$ is isomorphic to the normalisation of the affine algebraic curve
$$y^p=(x^p-1)^2(x^p+1)^{\epsilon_j p-2(j+1)},$$
where $\epsilon_j$ is an integer chosen in such a way that $1\leqslant \epsilon_j p-2(j+1) < p$.
\end{theorem}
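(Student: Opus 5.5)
Following the pattern of Theorems \ref{Tma1} and \ref{Tma2}, I will first reduce an arbitrary ske $\Phi:\Gamma\to G$ with signature $(2p,2p,p)$ to one of the normal forms $\Phi_j$. Write $\Phi(x_1,x_2,x_3)=(ca^{k_1}b^{j_1},ca^{k_2}b^{j_2},a^{n}b^{m})$ with $j_1,j_2\neq 0$, $n\neq 0$ (otherwise the image lies in $\langle c,a\rangle\times\langle b\rangle$ missing $a$), subject to $k_2-k_1+n\equiv 0$ and $j_1+j_2+m\equiv 0$ mod $p$. Conjugating by powers of $a$ and using the automorphisms $a\mapsto a^r$, $b\mapsto b^s$, $c\mapsto ca^t$ of $G$ (note that $b\mapsto a^u b$ is not available, since $a^ub$ is not central), I will normalise $x_1\mapsto cb$ and $x_3\mapsto ab^{j}$. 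The relation then forces $x_2\mapsto ca^{-1}b^{-1-j}$. The conditions $j\neq -1$ and $j+1\neq 0$ are needed for $x_2$ to have order $2p$, which gives $0\leqslant j\leqslant p-2$. The genus $(p-1)^2$ follows from Riemann--Hurwitz, as in Theorem \ref{Tma1}.

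For the identifications in (1), I will use the remaining freedom: the surfaces are isomorphic if the epimorphisms differ by $\mathrm{Aut}(G)$ together with the braid/permutation action permuting the two periods $2p$. Swapping $x_1$ and $x_2$ (the braid $(x_2,x_2^{-1}x_1x_2,x_3^{\ldots})$ up to inversion) and then renormalising sends $(cb,ca^{-1}b^{-1-j},ab^j)$ to a form with $b$-exponent of $x_1$ equal to $-1-j$. Rescaling $b\mapsto b^{s}$ with $s(-1-j)\equiv 1$ then yields the parameter $j'$ with $j'\equiv -j/(1+j)$, that is $j+jj'+j'\equiv 0$. This explains the rule; $j=0$ and $j=p-2$ are fixed points of the involution $j\mapsto -j/(1+j)$ (check: $j=-2$ gives $-2+4-2=0$). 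Counting orbits of this involution on $\{0,\ldots,p-2\}$ gives the number $\frac{p+1}{2}$. The fixed points solve $j^2+2j\equiv0$, so $j=0,p-2$, and the remaining $p-3$ values pair up, for a total of $2+\frac{p-3}{2}$. One must also show that nothing else identifies them. For this I will check that the invariant $j+jj'+j'$ classes exhaust the $\mathrm{Aut}(G)\times$braid orbits, by tracking the pair of $b$-exponents of $x_1,x_2$ up to common scaling.

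For (3), I will use Singerman's list as in Theorem \ref{Tma1}: $\Gamma_{(2p,2p,p)}\le\Gamma'_{(2,2p,2p)}\le\Gamma''_{(2,2p,4)}$ are the only overgroups. Via \cite[Case N8]{BCC}, an extension to index $2$ exists iff there is an involutive automorphism of $G$ swapping $\Phi_j(x_1)$ and $\Phi_j(x_2)$ and inverting $\Phi_j(x_3)$ up to conjugation. This is exactly the self-identification condition, so it holds iff $j\in\{0,p-2\}$. For $j=p-2$ the extended group is $\mathbb{D}_p\times\mathbb{Z}_{2p}$ (where $x_3\mapsto ab^{-2}$ and $b$ effectively merges with $c$), and a further extension is excluded by an involution-form argument as in Theorem \ref{Tma1}. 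For $j=0$, I will exhibit the order-$8p^2$ group $\mathbb{D}_p^2\rtimes\mathbb{Z}_2$ with an explicit ske of signature $(2,2p,4)$ whose restriction to an index-$4$ subgroup recovers $\Phi_0$, mirroring the claim in Theorem \ref{Tma2}. This extension step, especially identifying the correct subgroup and checking the restriction for $j=0$, is the main obstacle.

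For (4), I will take $N=\langle b\rangle$. Its fixed points lie over the fibres of $x_1,x_2$, giving $2p$ branch values, and $Y/N$ has genus $0$ by Riemann--Hurwitz. The induced $\hat a$ is of order $p$, and $\hat c$ is an involution normalising it, so the branch values can be normalised to the $p$-th roots of $1$ (fibre of $x_1$) and of $-1$ (fibre of $x_2$). Computing rotation numbers as in Theorem \ref{Tma2} gives $\mathrm{rot}(b)=\omega_p^{1/2}$ on the first fibre, hence exponent $2$. On the second fibre $b^{-1-j}$ appears, giving exponent $-2(j+1)$ reduced mod $p$, which produces the stated curve.
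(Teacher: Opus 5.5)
Your proposal is correct in substance. The normal form, the genus, the identification rule in (1) and the $p$-gonal model in (4) follow the paper: the swap $(x_1,x_2,x_3)\mapsto(x_2,x_2^{-1}x_1x_2,x_3)$ followed by $c\mapsto ca$, $b\mapsto b^{\delta}$ is the paper's Claim 5, and the rotation-number step is the one from Theorem \ref{Tma2}.

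\textbf{Where you differ: part (3).} The paper proceeds in three steps. It classifies the groups of order $4p^2$ containing $G$ and acting with signature $(2,2p,2p)$, obtaining $\mathbb{D}_p\times\mathbb{D}_p$ and $\mathbb{D}_p\times\mathbb{Z}_{2p}$. It then lists their skes on $\Gamma'$ and restricts them to $\Gamma$ (Claims 1 and 3). You instead observe that the action extends to index two iff $K_j\trianglelefteq\Gamma'$, that is, iff the class of $\Phi_j$ is fixed by the swap. So the exceptional $j$ are the fixed points of $j\mapsto -j/(1+j)$, i.e.\ $j^2+2j\equiv 0$.

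What your route buys:
\begin{itemize}
\item It merges Claims 1, 3 and 5 into a single computation.
\item It needs no classification of groups of order $4p^2$: the extension is $G\rtimes_\alpha\mathbb{Z}_2$, with $\alpha$ the swapping automorphism.
\item For $j=p-2$, $\alpha$ is conjugation by $ca^{(p-1)/2}$, giving $G\times\mathbb{Z}_2\cong\mathbb{D}_p\times\mathbb{Z}_{2p}$.
\item For $j=0$, $\alpha$ is $a\mapsto a^{-1}$, $b\mapsto b^{-1}$, $c\mapsto ca^{-1}$, giving $\mathbb{D}_p\times\mathbb{D}_p$.
\end{itemize}

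What the paper's route buys: explicit skes on $\Gamma'$, which it reuses for the second level (Claims 2 and 4) and for the remark $X\cong Z_{p-2}$.

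\textbf{The second level is only sketched in your proposal.} The same swap criterion, applied one level up, finishes it.
\begin{itemize}
\item For $j=p-2$, in the paper's notation for $\mathbb{D}_p\times\mathbb{Z}_{2p}$, the subgroup $\langle A,B,W\rangle$ is the unique abelian subgroup of index $2$, hence characteristic. In every ske of signature $(2p,2p,2)$, exactly one of the two elements of order $2p$ lies in it. So no automorphism interchanges them, and the action does not extend further.
\item For $j=0$, an automorphism interchanging the two dihedral factors works, as in Claim 2.
\end{itemize}

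\textbf{Two small repairs.}

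First, the Bujalance--Cirre--Conder condition you quote is off. The swapping automorphism sends $\Phi_j(x_3)$ to $(\Phi_j(x_2)\Phi_j(x_1))^{-1}=\Phi_j(x_1)^{-1}\Phi_j(x_3)\Phi_j(x_1)$. This is a conjugate of $\Phi_j(x_3)$, not of its inverse. Taken literally, your version would wrongly exclude $j=p-2$, where $\alpha(ab^{-2})=a^{-1}b^{-2}$. Since you actually use the self-identification condition, nothing breaks.

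Second, the ``only if'' half of (1), and hence the count $\frac{p+1}{2}$, requires $\mathrm{Aut}(Z_j)=G$ for $1\leqslant j\leqslant p-3$. Only then is any isomorphism $Z_i\to Z_j$ induced by an element of $N(\Gamma)=\Gamma'$. Checking orbits of $\mathrm{Aut}(G)$ together with the swap does not by itself give this. So (3) must come before you close (1). $Z_0$ and $Z_{p-2}$ are then separated from the others, and from each other, by the orders of their automorphism groups. After that, your invariant closes (1) cleanly: the ratio of the $b$-exponents of $\Phi(x_1)$ and $\Phi(x_2)$, taken modulo inversion, shows the orbits are exactly the pairs $\{j,j'\}$.
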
 

\begin{proof}
Let $Z$ be a compact Riemann surface endowed with a group of automorphisms isomorphic to $G=\Bbb{D}_p\times \Bbb{Z}_p=\langle a,c \rangle \times \langle b \rangle$
acting with signature  $(2p,2p,p)$. Note that each ske 
$\Phi:\Gamma \to G$ that represents an action of $G$ with signature $(2p,2p,p)$ is given by 
$$\Phi(x_1,x_2,x_3)=(ca^{n_1}b^{m_1},ca^{n_2}b^{m_2},a^{n_3}b^{m_3}) \text{ for some } 0\leqslant n_i, m_3\leqslant p-1, 1\leqslant m_1,m_2\leqslant p-1$$
that satisfies $-n_1+n_2+n_3\equiv m_1+m_2+m_3\equiv 0$ mod $p,$ where $n_3$ and  $m_3$ are not simultaneously zero. 
By proceeding analogously to Theorem \ref{Tma1}, it can be seen that $\Phi$ is $G$-equivalent to 
\begin{equation}\label{G8-Tma2-ec0}
\Phi_j(x_1,x_2,x_3)=(cb,ca^{-1}b^{-1-j},ab^{j}) \text{ for some } 0\leqslant j\leqslant p-2.
\end{equation}
Hence, $Z$ is isomorphic to $Z_j$ for some $0\leqslant j\leqslant p-2$. It is not difficult to see that if $i \neq j$ then $\Phi_i$ and $\Phi_{j}$ are $G$-inequivalent. Note that the genus of $Z_j$ is $(p-1)^2$. 

\s

The signature $(2p,2p,p)$ is not maximal and  
the following chain of inclusions  holds.
\begin{equation*}\label{G8-Tma2-ec1}
   \Gamma=  \Gamma_{(2p,2p,p)} \underset{2}{\trianglelefteq} \Gamma'=\Gamma'_{(2p,2p,2)}\underset{2}{\trianglelefteq} \Gamma''=\Gamma''_{(2,2p,4)},
\end{equation*}
where $\Gamma''$ is maximal and $\Gamma$ is not contained in any other Fuchsian group \cite{Singerman}. Hence, there are three possibilities:

\s

{\bf (a)} $\mbox{Aut}(Z_j) \cong G,$

 {\bf (b)} $\mbox{Aut}(Z_j)$ has order $4p^2$ and acts on $Z_j$ with signature $(2p,2p,2),$ and

{\bf (c)} $\mbox{Aut}(Z_j)$ has order $8p^2$ and acts on $Z_j$ with signature $(2,2p,4).$

\s

Assume that the action of $G$ on $Z_j$ extends to a larger group $G'$ of order $4p^2.$ The classification of groups of order $4p^2$ is known (see, for instance, \cite[Section IV]{Joel}). After restricting to those groups that contain a subgroup isomorphic to $G$
 and requiring that they admit an action of signature $(2,2p,2p)$, one obtains that $G'$ is  isomorphic to either $G_1=  \Bbb{D}_{p}\times \Bbb{D}_p$ or $G_2= \Bbb{D}_p\times \Bbb{Z}_{2p}$.

\s

\it{Claim 1:}\rm \ The action of $G$ on $Z_j$ extends to an action of $G_1$ if and only if $j=0$.

\s

We consider the canonical presentation of $\Gamma'$  \begin{equation}\label{felix}\Gamma'=\langle y_1, y_2, y_3 \ | \  y_1y_2y_3= y_1^{2p}=y_2^{2p}=y_3^{2}=1 \rangle\end{equation}and the following presentation of $G_1$.$$G_1 = \langle A,B,C,W \ | \ A^p=B^p=C^2=W^2=1,  \ [A,B]=[A,W]=[B,C]=[C,W]=1, CAC=A^{-1}, WBW=B^{-1} \rangle.$$The involutions of $G_1$ are $CA^k$, $WB^j$ and $CA^kWB^j$, whereas the elements of order $2p$ of $G_1$ are of two types: the first type is given by $CA^kB^{l}$ and the second type by $WB^jA^{i}$, where $0\leqslant k,j\leqslant p-1$, $1\leqslant l,i \leqslant p-1$. The fact that two elements of order $2p$ of the same type  cannot generate $G_1$ implies that  each ske $\Gamma'\to G_1$ representing an action of $G_1$ with signature 
$(2p,2p,2)$ is given by 
\begin{equation*}\label{Ec-Tma3-1}
(CA^{k_1}B^{j_1},WB^{j_2}A^{k_2},CA^{k_3}WB^{j_3}) \ \text{or} \ 
(WB^{j_4}A^{k_4},CA^{k_5}B^{j_5},CA^{k_6}WB^{j_6})
\end{equation*}
 where $ 0\leqslant k_1,k_3,k_5,k_6,j_2,j_3,j_4,j_6 \leqslant p-1  \text{ and } 1\leqslant k_2,k_4,j_1,j_5 \leqslant p-1 $
satisfy that $-k_1-k_2+k_3 \equiv j_1-j_2+j_3 \equiv 
-j_4-j_5+j_6 \equiv k_4-k_5+k_6 \equiv 0$ mod $p$. 
A routine computation shows that all these skes are pairwise $G_1$-equivalent and therefore $G_1$ acts on a unique compact Riemann surface $Z'$ with signature $(2p,2p,2)$. Without loss of generality, we can assume such an action to be represented by the ske $\Phi_1' : \Gamma' \to G_1$ given by  $$\Phi_1'(y_1, y_2, y_3)=(WBA^{\beta}, CB, CA^{-\beta}W) \mbox{  where } 2 \beta \equiv 1 \mbox{ mod }p.$$

We consider the inclusion of Fuchsian groups 
$\eta: \Gamma \to \Gamma'$ given by $\eta(x_1,x_2,x_3)=(y_2,y_3y_2y_3^{-1},y_1^2)$. The restriction of $\Phi_1'$ to $\eta(\Gamma)\cong \Gamma$ is given by 
\begin{equation}\label{G8-Tma2-ec3}
    \Phi_1'|_{\Gamma}(x_1,x_2,x_3)=\Phi_1'(y_2,y_3y_2y_3^{-1},y_1^2)= (CB, CA^{-1}B^{-1}, A).
\end{equation}Note that $\Phi_1'(\Gamma)=\langle A, B, C \rangle \cong \mathbb{D}_p \times \mathbb{Z}_p$ and therefore $Z' \cong Z_j$ for some $0 \leqslant j \leqslant p-2.$ The proof Claim 1 follows after noticing that \eqref{G8-Tma2-ec3}
agrees with 
\eqref{G8-Tma2-ec0} with $j=0$.

\s

\it{Claim 2:}\rm \ $\mbox{Aut}(Z_0)\cong G_1'$, where  $G_1'\cong \Bbb{D}_p^2\rtimes \Bbb{Z}_2$ is the group of order $8p^2$ with   presentation 
\begin{multline*}
 \langle A,B,C,W,U \ | \  A^p=B^p=C^2=W^2=U^2=1, \ [A,B]=[A,W]=[B,C]=[C,W]=1, \\ CAC=A^{-1}, WBW=B^{-1}, UAU=B^{-1},UBU=A^{-1}, UCU=WB^2,UWU=CA^2 \rangle.   
\end{multline*}

The map given by
$A \mapsto B^{-1}, B \mapsto A^{-1}, C \mapsto WB^{2}$ and $W \mapsto CA^{2}$ defines an automorphism of $G_1$ of order two that satisfies the equalities \begin{equation}\label{G8-Tma2-ec4-2}
    \alpha(CAB)=WBA^{-1}  \ \text{ and } \ 
     \alpha(CW)=CA^2WB^2.
\end{equation}It follows from \cite[Case N8]{BCC} that the action of $G_1$ on $Z_0$ extends to an action of $G_1'$ with signature $(2,2p,4)$. The maximality of the signature $(2,2p,4)$ implies that
$\mbox{Aut}(Z_0)\cong G_1'$ and the proof of Claim 2 is done.

\s

\it{Claim 3:}\rm \ The action of $G$ on $Z_{j}$  extends to an action of $G_2$ if and only if $j=p-2$.

\s

We consider $G_2$ with the following presentation
$$\langle A,B,C,W \ |  A^p=B^p=C^2=W^2=1, CAC=A^{-1}, [A,B]=[A,W]=[B,W]=[B,C]=[C,W]=1 \rangle.$$
The elements of order $2p$ in $G_2$ are of three type: the ones of type 1 are $CA^kB^j$, the ones of type 2 are
$WA^kB^i$ and the ones of type 3 are $CA^kWB^j$, where $0\leqslant k,i \leqslant p-1, 1\leqslant j\leqslant p-1,$ with $k$ and $i$ are  not simultaneously zero. The involutions of $G_2$ are $W$, $CA^k$ and  $CA^kW$ for $0\leqslant k \leqslant p-1.$  

\s
We consider the Fuchsian group $\Gamma'$ with canonical presentation \eqref{felix}. If $$\Phi_2':\Gamma'\to G_2 \, \mbox{ given by } \Phi'_2(y_1,y_2,y_3)=(g_1,g_2,g_3)$$ is a ske representing an action of $G_2$ of signature $(2p,2p,2)$ then
$g_1$ and $g_2$ cannot be of the same type. Indeed,  if this were the case, then product $g_1g_2$ would not have order $2$.
Moreover, $g_1$ and $g_2$ cannot be of types $1$ and $3$ respectively (nor of types 3 and 1, respectively) because  otherwise the requirement that the product $g_1g_2$ have order two would force $g_3=W.$ This, however, would imply that $\Phi'_2$ is not surjective. It follows that, after considering the automorphism of $G_2$ given by $A\mapsto A, C\mapsto CW, B\mapsto B, W\mapsto W$, the ske
$\Phi_2'$  is given by
\begin{equation}\label{G8-Tma2-ec6-4}
(CA^{k_1}WB^{j_1},WA^{k_2}B^{j_2}, CA^{k_3})  
\text{ where }
0\leqslant k_1,k_2,k_3,j_2\leqslant p-1,  \ 1\leqslant j_1\leqslant p-1 
\end{equation} 
or by
\begin{equation}\label{G8-Tma2-ec6-4-2}  
(WA^{k_4}B^{j_4},CA^{k_5}WB^{j_5},CA^{k_6}) \text{ where }
0\leqslant k_4,k_5,k_6,j_4\leqslant p-1,  \ 1\leqslant j_5\leqslant p-1,
\end{equation}
where $k_2,j_2$ and $k_4,j_4$ are not simultaneously zero, and $-k_1-k_2+k_3\equiv j_1+j_2\equiv k_4-k_5+k_6\equiv j_4+j_5\equiv 0$ mod $p$. It is a routine computation to show that 
 \eqref{G8-Tma2-ec6-4} and \eqref{G8-Tma2-ec6-4-2} are $G_2$-equivalent to$$\Phi'_{2,1}(y_1,y_2,y_3)=(CAB,WA^{-1}B^{-1}, CW) \mbox{ and }
\Phi'_{2,2}(y_1,y_2,y_3)=(WAB,CAB^{-1},CW)$$ respectively.
We then conclude that $G_2$ acts on two Riemann surfaces with signature $(2p,2p,2)$, that we denote by $Z_i'=\mathbb{H}/\mbox{ker}(\Phi'_{2,i})$ for $i=1,2.$ Let $$\eta_1(x_1, x_2, x_3) = (y_3 y_1 y_3^{-1}, y_1, y_2^2) \mbox{ and }\eta_2(x_1, x_2, x_3) = (y_2,y_3y_2y_3^{-1},y_1^2)$$ be two inclusions of $\Gamma$ in $\Gamma'.$ By proceeding as in the proof of Claim 1, we obtain that 
 the restriction of $\Phi'_{2,i}$ to $\Gamma \cong \eta_i(\Gamma)$ is a ske $\Gamma \to G$ which is, in both cases, $G$-equivalent to \eqref{G8-Tma2-ec0} with $j=p-2$. As a result, we obtain that $Z_1'\cong Z_2'\cong Z_{p-2}$ and the proof of Claim 3 follows. 

\s

\it{Claim 4:}\rm \ $\mbox{Aut}(Z_{p-2}) \cong \Bbb{D}_p\times \Bbb{Z}_{2p}.$

\s

As mentioned at the beginning of the proof, if $\mbox{Aut}(Z_{p-2})$ is not isomorphic to $\Bbb{D}_p\times \Bbb{Z}_{2p}$ then $\mbox{Aut}(Z_{p-2})$ has order $8p^2$ and acts on $Z_{p-2}$ with signature $(2,2p,4).$ We consider again the results in  \cite{BCC} to prove that the aforementioned situation is not realized. To do that, it is enough to verify that there is no involution   $\alpha$ of $G_2$
satisfying the following relations:
\begin{equation}\label{G8-Tma2-ec10}
    \alpha(CAB)=WA^{-1}B^{-1}\ \text{ and } \
    \alpha(CW)=CA^2W.
\end{equation}
Note that the first equality above implies that $\alpha(W)=CA$. This, coupled with the second equality above, forces  $$\alpha(C)\alpha(W)=\alpha(C)CA=CA^2W \, \mbox{ and therefore } \, \alpha(C)=A^{-1}W;$$which is impossible. This proves Claim 4. 

\s

Claims 1 and 3 imply that
$\mbox{Aut}(Z_j)\cong G$  for each  
$1\leqslant j\leqslant p-3.$ We now observe that these Riemann surfaces are isomorphic in pairs, yielding $\tfrac{p-3}{2}$ isomorphism classes.

\s

\it{Claim 5:}\rm \ Let $i \neq j$ be integers in $\{1, \ldots, p-3\}$. Then $Z_j \cong Z_i$ if and only if $i+ij+j\equiv 0$ mod $p.$ 

\s
We recall that the action of $G$ on each $Z_j$ is represented by the ske
$$\Phi_j(x_1,x_2,x_3)=(cb,ca^{-1}b^{-1-j},ab^{j}).$$ The fact that $G\cong \mbox{Aut}(Z_j)$ implies that 
$\Gamma=N(K_j)$ where $K_j=\mbox{ker}(\Phi_j)$, and therefore 
$Z_i$ and $Z_j$ are isomorphic if and only if the corresponding surface groups $K_j$ and $K_i$
 are conjugate in $N(\Gamma)=\Gamma'_{(2,2p,2p)}$.
 Since the stabilizer of each surface group under the action by conjugation  of $\Gamma'$ is $\Gamma$, we obtain that the action by conjugation has orbits of length
 $[\Gamma':\Gamma]=2$.
If we consider the inclusion $\Gamma \to \Gamma'$ given by
$$(x_1,x_2,x_3) \mapsto (z_1, z_2, z_3):=(y_2^{-1}y_1y_2,y_1,y_2^2)$$ then $\Gamma'/\Gamma = \langle Y_2 \ | \ Y_2^2=1 \rangle $, where $Y_2$ stands for the class of $y_2$. The action by conjugation of $\Gamma'/\Gamma$ is given by
$$Y_2. z_1= z_2, \  Y_2 . z_2 =z_2^{-1}z_3^{-1}  \ \text{ and } \  Y_2 . z_3 = z_3,$$ showing that $\Phi_j=(cb,ca^{-1}b^{-1-j},ab^{j})$ and $Y_2\cdot \Phi_j=(ca^{-1}b^{-1-j}, ca^{-2}b , ab^{j})$  form an orbit under this action.
To prove Claim 5 it enough to observe that, after applying the automorphism of $G$ given by 
 $a \mapsto a,b\mapsto b^{\delta}, c\mapsto ca$ where $\delta$ satisfies $\delta(1+j)\equiv -1$ mod $p$, the ske  $Y_2 \cdot \Phi_j$ is $G$-equivalent to 
$$\Phi_{-\delta-1}(x_1,x_2,x_3) =
(cb, ca^{-1}b^{\delta} , ab^{-1-\delta}).$$If we write $i=-\delta-1$ then $i+ij+j \equiv 0 \mbox{ mod }p,$ as claimed.

\s

All the above shows that there are precisely   $\frac{p-3}{2}+2=\frac{p+1}{2}$  pairwise 
non‑isomorphic Riemann surfaces admitting an action of $G=\mathbb{D}_p \times \mathbb{Z}_p$ with signature $(2p,2p,p)$.

\s

In order to obtain the desired algebraic model for each $Z_j$ we consider the regular covering map $\pi_j:Z_j\to Z_j/G$  induced by the action of $G$ on $Z_j$, and denote by $a_1,a_2,a_3$ its the ordered  branch values.
Let $N \cong \mathbb{Z}_p$ the normal subgroup of $G$ generated by $b$ and let  $\pi_{j,N}:Z_j\to Z_j/N$ be the corresponding regular covering map. A routine computation shows that $\pi_{j,N}$ ramifies over $2p$ values and $Z_j$ is $p$-gonal. Thus, $Z_j$  admits a model as the curve   
\begin{equation}\label{G8-Tma2-ec20}
    y^p=\Pi^{2p}_{i=1}(x-\alpha_i)^{n_i},
\end{equation}
where $\alpha_1,\ldots,\alpha_{2p}$ are the branch values of  $\pi_{jN}$ and $n_1,\ldots,n_{2p}$ are integers in
$\{1,\ldots, p-1 \}$. The fact that $Z_j/N$ admits an action of $K=\mathbb{D}_p$ such that $(Z_j/N)/K\cong Z_j/G$ allows us to assume that $\alpha_i=\omega_p^{i-1}$ and $\alpha_{p+i}=\lambda\omega_p^{i-1}$ for $i=1, \ldots, p$, where $\lambda^p=-1$. Finally, we argue as done in the proof of statement (3) in Theorem \ref{Tma2} to see that $n_i=2$ for $i=1, \ldots, p$ and $n_i=\epsilon_jp-2(j+1)$ for $i=p+1,\ldots, 2p$, where $\epsilon_j$ an integer chosen in such a way that $1 \leqslant \epsilon_j p -2(j+1)\leqslant p-1$. 
Hence, equation \eqref{G8-Tma2-ec20} becomes the desired equation, and this completes the proof.
\end{proof}

\begin{rema} It follows from the proof of Claim 3 above that
the Riemann surfaces $X$ of Theorem \ref{Tma1} and  $Z_{p-2}$ of Theorem \ref{Tma3} are isomorphic.
\end{rema}

\section{The cyclic case}\label{s4}

Let $p \geqslant 3$ be a prime number. We recall that, according to Lemma \ref{Lema1}, the cyclic group of order $2p^2$ can act triangularly in genus $g \geqslant 2$ with signatures $$(2,p^2, 2p^2), (2p^2, 2p^2, p), (2p^2, 2p^2, p^2) \mbox{ or } (2p, p^2, 2p^2)$$only. Indeed, by \cite[Theorem 4]{Harvey} each such a signature is realised.

\begin{theorem}\label{Tma4}
Let $p \geqslant 3$ be a prime number. Let $S$ be a compact Riemann surface of genus $g \geqslant 2$ admitting a group of automorphisms $G$ isomorphic to the cyclic group of order $2p^2$ acting triangularly.  Then one of the following statements holds.

\s

{\bf (1)} $g=\tfrac{1}{2}(p^2-1)$ and $S$ is isomorphic to the normalisation of the curve $\mathscr{X}_p$ given by $y^{p^2}=x^2+1.$ The automorphism group of $S$ is isomorphic to $G$ and acts with signature $(2, p^2, 2p^2).$

\s

{\bf (2)} $g=p(p-1)$ and $S$ is isomorphic to the normalisation of the curve $\mathscr{Y}_{p,m}$ given by $$y^p=x^2(x^{2p}-1)^m \, \mbox{ for some }  m \in \{1, \ldots, \tfrac{p-1}{2}\}.$$ The automorphism group of $S$ is isomorphic to $G$ and acts  with signature $(2p^2, 2p^2, p).$ Moreover $\mathscr{Y}_{p,m} \cong \mathscr{Y}_{p,m'}$ if and only if $m=m',$ and there are exactly $\tfrac{p-1}{2}$ isomorphism classes of Riemann surfaces $S$ in this case.

\s

{\bf (3)} $g=\tfrac{(p-1)(2p+1)}{2}$ and $S$ is isomorphic to the normalisation of the  curve $\mathscr{W}_{p,k}$  given by  $$y^{p^2}=x^{kp}(x^{2}-1) \mbox{ for some } k \in \{1, \ldots, p-1\}.$$The  automorphism group  is isomorphic to $G$ and acts  with signature  $(2p,p^2, 2p^2).$ Moreover
$\mathscr{W}_{p,k} \cong \mathscr{W}_{p,k'}$ if and only if $k=k',$ and there are exactly $p-1$ isomorphism classes of Riemann surfaces $S$ in this case.

\s

{\bf (4)} $g=p^2-1$  and $S$ is isomorphic to the normalisation of the  curve $\mathscr{Z}_{p,l}$ given by $$ y^{p^2}=x^2(x^{2p}-1)^l \mbox{ for some } l \in \{1, \ldots, p^2\} \mbox{ such that } (l,p)=(l-1, p)=1.$$

The automorphism group of $S$ is isomorphic to \begin{displaymath}
 \left\{ \begin{array}{cc}
 \mathbb{Z}_{p^2} \rtimes \mathbb{D}_4 & \textrm{if  $S \cong \mathscr{Z}_{p,p-2}$}\\
 G & \textrm{otherwise}
  \end{array} \right.
\end{displaymath}where $\mathbb{Z}_{p^2} \rtimes \mathbb{D}_4$ stands for the group with presentation \begin{equation}\label{azul}\langle A, R, T \, | \, A^{p^2}=R^4=T^2=(TR)^2=1, RAR^{-1}=TAT=A^{-1} \rangle.\end{equation} The signature of the action of the group above is $(2,4,2p^2)$  in the former case, and  $(2p^2, 2p^2,p^2)$ in the latter. Furthermore, for $l \neq l'$ in  $\{1, \ldots, p^2\}$ we have that 
$\mathscr{Z}_{p,l} \cong \mathscr{Z}_{p,l'}$  if and only if  $l+ll'+l' \equiv 0 \mbox{ mod }p^2,$ and there are exactly $\tfrac{p^2-2p+1}{2}$ isomorphism classes of Riemann surfaces $S$ in this case.
\end{theorem}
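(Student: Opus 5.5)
Following the pattern of Theorems \ref{Tma1} and \ref{Tma3}, we treat the four signatures of Lemma \ref{Lema1}(4) separately. Throughout, $G=\langle t \mid t^{2p^2}=1\rangle$, $\mbox{Aut}(G)\cong \mathbb{Z}_{2p^2}^{*}$, and $\Gamma=\langle x_1,x_2,x_3\rangle$ denotes a triangular Fuchsian group of the given signature. The plan for each signature has three steps.

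\textbf{Counting actions.} First we enumerate the skes up to $G$-equivalence. Every ske has the form $\Phi(x_1,x_2,x_3)=(t^{u_1},t^{u_2},t^{u_3})$ with $u_1+u_2+u_3\equiv 0$ mod $2p^2$ and the orders of the $t^{u_i}$ prescribed.
\begin{itemize}
\item For $(2,p^2,2p^2)$, normalising $u_3=1$ forces $u_1=p^2$, so there is a unique action.
\item For $(2p^2,2p^2,p)$, we may take $\Phi=(t,t^{u},t^{-1-u})$ with $u$ a unit and $-1-u\equiv 2pm'$.
\item For $(2p,p^2,2p^2)$ the ske is $(t^{pk'},t^{2v},t^{-pk'-2v})$, and normalising gives $p-1$ classes.
\item For $(2p^2,2p^2,p^2)$ we take $\Phi=(t,t^{l'},t^{-1-l'})$ with $l'$ odd, $p\nmid l'$, $p\nmid 1+l'$, and $1+l'$ even.
\end{itemize}
A direct count modulo the residual automorphisms gives the classes before identifications.

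\textbf{Algebraic models.} Next we realise each action on an explicit cyclic cover and match it by uniqueness of the ske. The projection $(x,y)\mapsto x$ on each proposed curve is $n$-gonal. The group $G$ is generated by $(x,y)\mapsto(x,\omega y)$ together with a lift of $x\mapsto \omega_{2p}x$, or of $x\mapsto -x$. The genus formula of Section \ref{preli} gives $g$ by Riemann--Hurwitz. To match exponents $m,k,l$ with the skes, we compute rotation numbers at the fixed points exactly as in the proof of Theorem \ref{Tma2}: the exponent at each branch value is the inverse of the rotation number of the generator of the $n$-gonal subgroup $\langle t^2\rangle$ or $\langle t^p\rangle$. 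This identifies each class with $\mathscr{X}_p$, $\mathscr{Y}_{p,m}$, $\mathscr{W}_{p,k}$ or $\mathscr{Z}_{p,l}$.

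\textbf{Full automorphism group.} Finally we determine $\mbox{Aut}(S)$ using Singerman's list \cite{Singerman}.
\begin{itemize}
\item The signatures $(2,p^2,2p^2)$ and $(2p,p^2,2p^2)$ admit no proper extension for $p\geqslant 3$, or only ones excluded by order considerations. Hence in those cases $\mbox{Aut}(S)\cong G$, and distinct skes give non-isomorphic surfaces.
\item For $(2p^2,2p^2,p)$ and $(2p^2,2p^2,p^2)$ we have the chain $\Gamma\trianglelefteq_2\Gamma'_{(2,2p^2,2p^2)}\trianglelefteq_2\Gamma''_{(2,4,2p^2)}$. By \cite[Case N8]{BCC}, the action extends to index two iff some automorphism of $G$ swaps $t^{u_1}$ and $t^{u_2}$. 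Since $G$ is abelian this means $u_2\equiv \alpha u_1$ with $\alpha^2=1$, and the $(x_1,x_2)$ swap orbit computation as in Claim 5 of Theorem \ref{Tma3} shows which skes are conjugate under $\Gamma'$.
\item In the $\mathscr{Y}$ case one checks that no non-abelian extension of order $4p^2$ contains $G$ with the required generation. So $\mbox{Aut}\cong G$, and the pairing $u\leftrightarrow u^{-1}$ collapses the classes to $m\in\{1,\dots,\frac{p-1}{2}\}$.
\item For $\mathscr{Z}$, conjugation by $\Gamma'$ sends $(t,t^{l'},\cdot)$ to the class of $(t,t^{l''},\cdot)$ with $l'+l'l''+l''\equiv 0$ mod $p^2$. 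This gives the stated identification and the count $\frac{(p-1)^2}{2}$ after removing fixed points of the involution.
\item The fixed point $l=p-2$ extends, via an explicit involution and \cite{BCC}, to the group (\ref{azul}) with signature $(2,4,2p^2)$.
\end{itemize}

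The main obstacle is this last step for case (4). One must show that exactly one class admits an extension, giving the dihedral-type group of order $8p^2$, and verify that no class extends only to an intermediate group of order $4p^2$ acting with $(2,2p^2,2p^2)$. We handle this by listing groups of order $4p^2$ containing a cyclic subgroup of order $2p^2$ generated by two elements of order $2p^2$ with involutory product, and testing each against the skes. The bookkeeping of the residue conditions modulo $p^2$ in the count is the other delicate point.
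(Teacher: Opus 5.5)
Your plan follows the paper's route: normalise skes by $\mathrm{Aut}(G)$, build cyclic-cover models via rotation numbers, use Singerman's list with the N8 criterion for extensions, and use the $\Gamma'/\Gamma$-action to identify surfaces. However, several steps are wrong or unjustified as written.

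\textbf{Case (2) overgroup chain.} The chain you use for $(2p^2,2p^2,p)$ is incorrect. Singerman's index-$2$ inclusion $(n,n,m)\trianglelefteq(2,n,2m)$ puts this $\Gamma$ inside $\Gamma'_{(2,2p,2p^2)}$, which is maximal. The chain through $(2,2p^2,2p^2)$ and $(2,4,2p^2)$ belongs only to $(2p^2,2p^2,p^2)$. The overgroups you list cannot occur here: a group of order $4p^2$ acting with $(2,2p^2,2p^2)$ lives in genus $p^2-1$, not $p(p-1)$. So as written you never test the actual candidate. The repair is immediate, because your swap criterion is exactly N8 for the correct inclusion. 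With $u=-1-2pm'$ and $p$ not dividing $m'$, one has $u^2\equiv 1+4pm'$, which is never $1$ modulo $2p^2$, so there is no extension. The pairing $u\leftrightarrow u^{-1}$ is then $m'\leftrightarrow p-m'$, with no fixed points.

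\textbf{Case (4) pairing.} Your identification is written in the wrong variable. For $(t,t^{l'},t^{-1-l'})$, conjugation by $\Gamma'$ sends $l'$ to $l'^{-1}$, so $l'+l'l''+l''\equiv 0$ is false for your parameter. The paper's relation concerns the exponent $l\equiv -1-l' \pmod{p^2}$ of the third generator (with $t=ab$). In that variable it reads $(1+l)(1+l^{*})\equiv l'\,l'^{-1}=1$. The unique self-paired class is $l'=1$, i.e.\ $(t,t,t^{-2})$, i.e.\ $l=p^2-2$; the ``$l=p-2$'' you copied from the statement is a misprint.

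\textbf{Case (4) model.} Your model step cannot succeed for $\mathscr{Z}_{p,l}$ as printed, and the genus check you propose would expose this. Since $p$ does not divide $l$, the curve $y^{p^2}=x^2(x^{2p}-1)^l$ has $2p+2$ totally ramified branch values, hence genus $p^3-p\neq p^2-1$. In fact $S\to S/\langle t^2\rangle$ is branched over only four points: the two $G$-fixed points and the two points lying over the branch value marked $p^2$. The rotation numbers of $a$ for $\Phi_l=(ab,a^{-l-1}b,a^l)$ give $y^{p^2}=x^2(x^2-1)^l$, with $p$ dividing neither $l$ nor $l+1$, and $G$ generated by $(x,y)\mapsto(-x,\omega_{p^2}y)$. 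The paper's proof asserts the printed model without this verification. So item (4) cannot be proved as stated; presumably $x^{2p}$ should read $x^2$ and $(l-1,p)$ should read $(l+1,p)$.

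\textbf{Case (1) overgroup.} $(2,p^2,2p^2)$ does have a proper overgroup, namely $(2,3,2p^2)$ of index $3$. ``Order considerations'' does not exclude it; an actual argument is needed, which the paper also omits. For instance, a group of order $6p^2$ containing $\mathbb{Z}_{2p^2}$ non-normally with index $3$ has central core $\mathbb{Z}_{p^2}$ with quotient $\mathbb{D}_3$, so it is $\mathbb{Z}_{p^2}\times\mathbb{D}_3$. Its abelianisation has order $2p^2$, so the group cannot be generated by elements of orders $2$ and $3$.

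\textbf{Minor slip.} The $p$-gonal group in case (2) is $\langle t^{2p}\rangle$. Your $\langle t^p\rangle$ has order $2p$.
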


\begin{proof}
We consider the group $G\cong \mathbb{Z}_{2p^2}$ with the  presentation $G=\langle a,b \ |  \ a^{p^2}=b^2=[a,b]=1\rangle,$ and study each signature separately.

\s

{\bf (1)} Assume that $G$ acts on $S$ with signature $(2, p^2, 2p^2)$ and therefore $g=\tfrac{1}{2}(p^2-1).$ Since $G$ has only one involution and since, up to automorphisms of $G$, an element of order $p^2$ can be taken as $a$, one has that there is only one ske representing the action of $G$, up to $G$-equivalence. Thus, $S$ is uniquely determined. On the other hand, the curve  $\mathscr{X}_p$ has genus $\tfrac{1}{2}(p^2-1)$ and the map $(x,y) \mapsto (-x, \omega_{p^2}y)$ generates a group of automorphisms $\mathscr{G}$ of it which is isomorphic to $G$. The fact that the point $(0,1) \in \mathscr{X}_p$ has $\mathscr{G}$-stabiliser of order two shows that the signature of the action is necessarily equal to $(2, p^2, 2p^2).$ The uniqueness of $S$ asserts that the normalisation of $\mathscr{X}_p$ is isomorphic to $S$, as desired.

\s

{\bf (2)}  Assume that $G$ acts on $S$ with signature $(2p^2, 2p^2, p)$ and therefore $g=p(p-1).$ Write $$\Gamma=\langle x_1, x_2, x_3 \, | \,  x_1x_2x_3=x_1^{2p^2}=x_2^{2p^2}=x_3^p=1\rangle.$$ Up to automorphisms of $G$, an element of order $2p^2$ of $G$ can be taken as $ab.$ As the elements of $G$ of order $p$ are $a^{pm}$ for $m \in \{1, \ldots, p-1\}$ we see that, up to $G$-equivalence, each ske representing the action of $G$ is given by $$\Phi_m: \Gamma \to G \mbox{ defined by } \Phi_m(x_1, x_2, x_3)=(ab, a^{-1-pm}b, a^{pm})$$for some $m \in \{1, \ldots, p-1\}.$  In other words, if we write $S_m = \mathbb{H}/\mbox{ker}(\Phi_m)$ then $S \cong S_m$ for some $m \in \{1, \ldots, p-1\}.$ Notice, in addition, that for $m \neq m'$ the skes $\Phi_m$ and $\Phi_{m'}$ are $G$-inequivalent. By arguing as in the previous theorems one has that either the automorphism group of $S$ is isomorphic to $G$ or it has order $4p^2$ and acts with signature $(2, 2p^2, 2p).$ By \cite{BCC}, the latter case occurs if and only if there is an automorphism of $G$ fixing $a$ and sending $ab$ to $a^{-1-pm}b.$ It is easy to see that this automorphism exists if and only if $pm \equiv -2 \mbox{ mod } p^2$. Therefore   the automorphism group of $S$ is isomorphic to $G$. After noticing that $S$ is $p$-gonal, the determination of appropriate  rotation numbers shows that $S_m$ is isomorphic to the normalisation of   $\mathscr{Y}_{p,m}$ for each $m.$ The normaliser of $\Gamma$ is a Fuchsian group $\Gamma'$ of signature $(2,2p^2, 2p).$ If we write $$\Gamma'=\langle y_1, y_2, y_3 \, | \, y_1y_2y_3=y_1^2=y_2^{2p^2}=y_3^{2p}=1\rangle$$ and consider the inclusion $(x_1, x_2, x_3) \mapsto (y_3^{-1}y_2y_3, y_2, y_3^2)$ then $\Gamma'/\Gamma=\langle Y_3 \, | \, Y_3^2=1\rangle.$ 
Note that $Y_3\cdot \Phi_{m}=\Phi_{p-m}$ Thus, the isomorphism classes are represented by $S_m$ with $1 \leqslant m \leqslant \tfrac{p-1}{2},$ and the proof is done.

\s

{\bf (3)} Assume that $G$ acts on $S$ with signature $(2p, p^2, 2p^2)$ and therefore $g=\tfrac{(p-1)(2p+1)}{2}$. Write $$\Gamma=\langle x_1, x_2, x_3 \, | \,  x_1x_2x_3=x_1^{2p}=x_2^{p^2}=x_3^{2p^2}=1\rangle.$$ By reasoning as in the previous case, it can be seen that the number of $G$-equivalence classes of  skes representing the action of $G$ on $S$ is $p-1,$ and that they are represented by the skes $$\Phi_n: \Gamma \to G \mbox{ defined by } \Phi_n(x_1, x_2, x_3)=(a^{pn}b, a, a^{-1-pn}b)$$where $n \in \{1, \ldots, p-1\}.$ Thus, $S$ is isomorphic to $S_n =\mathbb{H}/ \mbox{ker}(\Phi_n)$ for some $n \in \{1, \ldots, p-1\}.$  The maximality of $\Gamma$ implies that the  automorphism group of $S$ is isomorphic to $G,$ and that $S_n$ and $S_{n'}$ are non-isomorphic for $n \neq n'$. On the other hand, note that the genus of  $\mathscr{W}_{p,k}$ agrees with the genus of $S.$ Moreover, $\mathscr{W}_{p,k}$ has a group of automorphisms $\mathscr{G} \cong \mathbb{Z}_{2p^2}$ generated by the map $$t: (x,y) \mapsto (-x, (-1)^k\omega_{p^2}y).$$  Observe that $ t^{2}$ is a $p^2$-gonal automorphism of $\mathscr{W}_{p,k}$ and that the corresponding  quotient map  $\mathscr{W}_{p,k} \to \mathscr{W}_{p,k}/\langle t^2 \rangle \cong \bar{\mathbb{C}}$ ramifies over four values: $0$ marked with $p$ and $\infty, \pm 1$ marked with $p^2.$ Thus, the signature of the action of $\mathscr{G}$ on $\mathscr{W}_{p,k}$ is $(2p, p^2, 2p^2).$ Consequently, each $S_n$ is isomorphic to the normalisation of $\mathscr{W}_{p,k}$ for some $k.$ Finally,  after identifying $\mathscr{W}_{p,k}/\mathscr{G}$ with the Riemann sphere with  $0, 1$ and $\infty$ marked with $2p, p^2$ and $2p^2$ respectively, each isomorphism between $\mathscr{W}_{p,k}$ and $\mathscr{W}_{p,k'}$  induces a M\"{o}bius transformation fixing $\infty, 0$ and $1.$  It then follows that  $\mathscr{W}_{p,k} \cong \mathscr{W}_{p,k'}$ if and only if $k=k'.$ All the above says that there is a bijective correspondence between  $$\{S_n \, | \, n \in \{1, \ldots, p-1\}\} \mbox{ and }\{\mathscr{W}_{p,k} \, | \, k \in \{1, \ldots, p-1\}\},$$and the proof is done.

\s

{\bf (4)} Assume that $G$ acts on $S$ with signature $(2p^2, 2p^2, p^2)$ and therefore $g=p^2-1$. If we write $$\Gamma=\langle x_1, x_2, x_3 \, | \,  x_1x_2x_3=x_1^{2p^2}=x_2^{2p^2}=x_3^{p^2}=1\rangle$$ then it can be seen that the  $G$-equivalence classes of skes representing the action of $G$ on $S$  are represented by  $$\Phi_l: \Gamma \to G \mbox{ defined by } \Phi_l(x_1, x_2, x_3)=(ab, a^{-l-1}b, a^{l})$$where $l \in \{1, \ldots, p^2\}$  satisfies  $(l,p^2)=(l+1,p^2)$=1. Thus, $S$ is isomorphic to $S_l =\mathbb{H}/ \mbox{ker}(\Phi_l)$ for some $l$ as before. Note that there is an automorphism of $G$ fixing $a$ and sending $ab$ to $a^{-l-1}b$ if and only if $l=p^2-2$. It follows from   \cite{BCC} that the automorphism group of $S_l$ is isomorphic to $G$ if and only if $l \neq p^2-2.$ To determine the  automorphism group of $S_{p^2-2}$, consider the group $\mathbb{Z}_{p^2} \rtimes \mathbb{D}_4$ with presentation \eqref{azul} and the ske $$\Theta : \Gamma''=\Gamma''_{(2,2p^2, 4)} \to \mathbb{Z}_{p^2} \rtimes \mathbb{D}_4 \mbox{ given by } \Theta(z_1, z_2, z_3)=(TR^2, A^{-1}TR, AR)$$  where $z_1, z_2$ and $z_3$ are canonical generators of $\Gamma''.$ A routine computation shows that the subgroup $H=\langle A, TR^3\rangle \cong \mathbb{Z}_{2p^2}$ of \eqref{azul} acts on $X:=\mathbb{H}/\mbox{ker}(\Theta)$ with signature $(p^2, 2p^2, 2p^2).$ Consequently, $X \cong S_{p^2-2}$ and the automorphism group of $S_{p^2-2}$ is isomorphic to \eqref{azul}. The normaliser of $\Gamma$ is a Fuchsian group $\Gamma'$ of signature $(2,2p^2, 2p^2).$ By proceeding analogously as in the proof of statement {\bf (2)}, the action by conjugation of $\Gamma'/\Gamma$ on $$\{\Phi_l: l \in \{1, \ldots, p^2\}, (l,p^2)=(l+1, p^2)=1 \mbox{ and } l \neq p^2-2 \}$$ gives rise to $\tfrac{p(p-2)-1}{2}$ orbits of length two only: $\{\Phi_l, \Phi_{l'}\}$ where $l'(1+l) \equiv -l \mbox{ mod }p.$ Thus, $S_l \cong S_{l'}$ if and only if $l+ll'+l'\equiv 0 \mbox{ mod }p^2,$ and therefore this case yields $1+\tfrac{p(p-2)-1}{2}=\tfrac{p^2-2p+1}{2}$ pairwise non-isomorphic Riemann surfaces.  Finally, observe that $S_l$ is a $p^2$-gonal Riemann surface and the corresponding $p^2$-gonal map is totally ramified. Thus, after determining the rotation numbers of $a$ at each one of its fixed points, one obtains that $S_l$ is isomorphic to the normalisation of the $\mathscr{Z}_{p,l}$ and this ends the proof.
\end{proof}

\begin{rema}
Observe that the Riemann surface appearing in case (1) in the theorem above is hyperelliptic, and none of the remaining Riemann surfaces in this paper enjoys this property.
\end{rema}


\section{Summary of the results}\label{s7}

The results presented in this paper provide a complete classification of compact Riemann surfaces admitting a triangular action of a group of order $2p^2$, where $p$ is an odd prime number.

\s

As a direct consequence of Theorems \ref{Tma1}, \ref{Tma2}, \ref{Tma3} and \ref{Tma4} we deduce that if $p\geqslant 5$ then there are precisely $\frac{p^2+2p+3}{2}$ pairwise  non-isomorphic  Riemann surfaces of genus at least two endowed  with a group of automorphisms of order $2p^2$ acting triangularly. If $p=3$ then such a number is 8. Moreover, as all the algebraic models determined in this paper are given in terms of affine curves given by polynomials with rational coefficients, we obtain the following result.

\begin{proposition}
    Each compact Riemann surface of genus at least two endowed  with a group of automorphisms of order $2p^2$ acting triangularly is defined over the rational numbers.
\end{proposition}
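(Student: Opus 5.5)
The proof is a case-by-case inspection of the classification already obtained, checking in each case that the surface is the normalisation of a plane affine curve cut out by a polynomial with rational coefficients.

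Let $S$ be a compact Riemann surface of genus $g \geqslant 2$ with a group $G$ of automorphisms, $|G|=2p^2$, acting triangularly. By Lemma \ref{Lema1}, $G$ is not $\mathbb{D}_{p^2}$ nor $\mathbb{Z}_p^2\rtimes\mathbb{Z}_2$. Hence $G$ is one of $\mathbb{Z}_{2p}\times\mathbb{Z}_p$, $\mathbb{D}_p\times\mathbb{Z}_p$ or $\mathbb{Z}_{2p^2}$, and the signature is one of those listed in the lemma. We then go through the cases in turn.

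\textbf{The group $\mathbb{Z}_{2p}\times\mathbb{Z}_p$.} By Theorem \ref{Tma1}, $S\cong X$, and $X$ is the normalisation of $y^p=x^{2p}-1$.

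\textbf{The group $\mathbb{D}_p\times\mathbb{Z}_p$ with signature $(2,p,2p)$.} For $p\geqslant 5$, Theorem \ref{Tma2} gives $S\cong Y$, the normalisation of $y^p=(x^p-1)^2$.

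For $p=3$ the signature is $(2,3,6)$. By the Riemann--Hurwitz formula this gives $2g-2=18(1-\tfrac12-\tfrac13-\tfrac16)=0$, so $g=1$ and the case is excluded by $g\geqslant 2$. This should be checked explicitly, since Theorem \ref{Tma2} assumes $p\geqslant5$.

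\textbf{The group $\mathbb{D}_p\times\mathbb{Z}_p$ with signature $(2p,2p,p)$.} By Theorem \ref{Tma3}, $S\cong Z_j$, the normalisation of
$$y^p=(x^p-1)^2(x^p+1)^{\epsilon_jp-2(j+1)}.$$
The exponent is a positive integer, so this polynomial has integer coefficients.

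\textbf{The cyclic group $\mathbb{Z}_{2p^2}$.} By Theorem \ref{Tma4}, $S$ is the normalisation of one of the curves $\mathscr{X}_p$, $\mathscr{Y}_{p,m}$, $\mathscr{W}_{p,k}$ or $\mathscr{Z}_{p,l}$. Each is given by an equation $y^{n}=f(x)$ with $f\in\mathbb{Z}[x]$ and nonnegative integer exponents.

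\textbf{Conclusion.} In every case $S$ is birational, over $\mathbb{C}$, to an affine curve $\mathscr{C}$ defined by a polynomial in $\mathbb{Q}[x,y]$. The normalisation of the projective closure of $\mathscr{C}$ can be computed over $\mathbb{Q}$: normalisation commutes with the flat base change $\mathbb{Q}\to\mathbb{C}$ in characteristic zero. Hence $S$ is isomorphic to the base change to $\mathbb{C}$ of a smooth projective curve defined over $\mathbb{Q}$, that is, $S$ is defined over $\mathbb{Q}$.

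\textbf{Expected obstacle.} There is none beyond bookkeeping. The one subtle point is that the classification really is exhaustive, including the small case $p=3$, where Theorem \ref{Tma2} is not stated but the configuration has genus one and is therefore excluded. One should also note that the models in Theorems \ref{Tma2}--\ref{Tma4} were chosen after Möbius normalisations that placed the branch values at roots of unity or at $\pm1$. These choices are already built into equations with rational coefficients, so no further descent argument is needed.
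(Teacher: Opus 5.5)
Your proposal is correct and follows the paper's argument exactly: the proposition is read off from the explicit models with rational coefficients in Theorems \ref{Tma1}, \ref{Tma2}, \ref{Tma3} and \ref{Tma4}. Your two additions make explicit what the paper leaves implicit: the genus-one exclusion of the signature $(2,3,6)$ when $p=3$, and the descent of the normalisation to $\mathbb{Q}$, which actually rests on $\mathbb{Q}$ being perfect (so smoothness survives base change) rather than on flatness. One caution when citing the models: the printed curve $y^{p^2}=x^2(x^{2p}-1)^l$ in Theorem \ref{Tma4}(4) has $2p+2$ totally ramified points and hence genus $p^3-p\neq p^2-1$ (the $p^2$-gonal quotient there has only four branch values, so a correct model has the form $y^{p^2}=x^{2}(x^{2}-1)^{l}$), but that model is again defined over $\mathbb{Q}$, so your conclusion is unaffected.
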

It is worth emphasising that the result above is very much in contrast with the situation for low genera, as shown in \cite{low}, and with the case of $pq$ automorphisms, as obtained in \cite{StreitWolfart}.

\s

We summarise the classification in the  table below. 
Here, we denote by $\mbox{sig.}G$ and $\mbox{sig.Aut}(S)$  the signature of the action of $G$ and of $\mbox{Aut}(S)$ respectively, and  $\#S$ is the number of pairwise non-isomorphic Riemann surfaces that admit an action with the indicated signature (in the first row we have that $p \neq 3$).

\s

\begin{center}
\renewcommand{\arraystretch}{1.5}

\begin{tabular}{|c|c|c|c|c|c|}
\hline
$G$ & sig.$G$ & $\#S$ & $\mbox{Aut}(S)$ & sig.$\mbox{Aut}(S)$ & genus \\
\hline \hline

\multirow{4}{*}{$\Bbb{D}_p\times \Bbb{Z}_p$}
& $(2,p,2p)$ & $1$ & $\Bbb{Z}_p^2\rtimes \Bbb{D}_3$ & $(2,3,2p)$ & $\frac{1}{2}(p-2)(p-1)$ \\ \cline{2-6}

& \multirow{3}{*}{$(2p,2p,p)$}
&  $1$ & $\Bbb{D}_p^2\rtimes \Bbb{Z}_2$ & $(2,2p,4)$ & $(p-1)^2$ \\ \cline{3-6}
&  & $1$ & $\Bbb{D}_p\times \Bbb{Z}_{2p}$ & $(2p,2p,2)$ & $(p-1)^2$\\ \cline{3-6}

&  & $\frac{p-3}{2}$ & $\Bbb{D}_p\times \Bbb{Z}_{p}$ & & $(p-1)^2$ \\ \hline

\multirow{6}{*}{$\Bbb{Z}_{2p^2}$}
& $(2,p^2,2p^2)$ &  $1$  & $ \Bbb{Z}_{2p^2}$  & & $\frac{1}{2}(p^2-1)$ \\ \cline{2-6}

& $(2p^2,2p^2,p)$ & $\frac{p-1}{2}$ & $ \Bbb{Z}_{2p^2}$  & & $p(p-1)$\\ \cline{2-6}

& $(2p,p^2,2p^2)$ & $p-1$ & $ \Bbb{Z}_{2p^2}$  & & $\frac{1}{2}(p-1)(2p+1)$ \\ \cline{2-6}

& \multirow{2}{*}{$(2p^2,2p^2,p^2)$}
& $\frac{p^2-2p-1}{2}$ & $ \Bbb{Z}_{2p^2}$  &  & $p^2-1$\\ \cline{3-6}

& &  $1$ & $\Bbb{Z}_{p^2}\rtimes \Bbb{D}_4$ & $(2,4,2p^2)$ & $p^2-1$\\ \cline{3-6}
 \hline

\end{tabular}
\end{center}

\s

\begin{rema}
As a straightforward consequence of our results we recover the strong symmetric genus for $\mathbb{D}_p \times \mathbb{Z}_p$ and $\mathbb{Z}_{2p^2}$, namely, the minimum genus among the Riemann surfaces on which these groups act as a group of automorphisms. See \cite{Harvey} and \cite{Patra}.
\end{rema}

\begin{rema} Let $S$ be a $p$-gonal Riemann surface of genus $g.$ According to the classical Severi-Castelnuovo inequality \cite{accola} if $g>(p-1)^2$ then the $p$-gonal group of $S$ is unique. Moreover, according to \cite[Theorem 1]{gabino}, if there are two $p$-gonal groups then they are conjugate in the automorphism group of $S.$ The automorphism groups of $p$-gonal Riemann surfaces admitting more than one $p$-gonal group were determined in \cite{Wootton2}. Observe that among our Riemann surfaces two of them have more than one $p$-gonal group: they correspond to the first two cases in the table above.
\end{rema}

Finally, the correspondence between quasiplatonic Riemann  surfaces and orientably-regular hypermaps allows us to obtain the following result.

\begin{proposition}
Let $p \geqslant 5$ be a prime number. There are precisely $p^2+p$ pairwise non-equivalent orientably-regular hypermaps of genus $g \geqslant 2$ with orientation-preserving automorphism group of order $2p^2$. They are classified as follows. 

\s

{\bf (1)} There are precisely $p$ orientably-regular hypermaps whose orientation-preserving automorphism group is isomorphic to $\mathbb{D}_p \times \mathbb{Z}_p.$ One of them is a reflexive map of type $\{p,2p\}$ while the remaining $p-1$ hypermaps are of type $\{2p,2p,p\}.$ Among the latter, two are supported on two distinct surfaces, whereas the remaining $p-3$ occur in pairs, namely, they are supported on $\tfrac{p-3}{2}$ different surfaces.  

\s

{\bf (2)} There is precisely one orientably-regular hypermap whose orientation-preserving automorphism group is isomorphic to $\mathbb{Z}_{2p}\times \mathbb{Z}_p.$ The  hypermap is of type is $\{p, 2p,2p\}$ and is supported on the same surface as one of the hypermaps described in item {\bf (1)}.

\s

{\bf (3)} There are precisely $p^2-1$ orientably-regular hypermaps whose orientation-preserving automorphism group is isomorphic to $ \mathbb{Z}_{2p^2}.$ One of them is a reflexive map of type $\{p^2, 2p^2\}$, $p-1$ of them are of type $\{2p^2, 2p^2, p\}$ and occur in pairs, and $p-1$ of them are of type $\{2p, p^2, 2p^2\}$. The remaining  $p^2-2p$ hypermaps are of type   $\{2p^2, 2p^2, p^2\}$ and $p^2-2p-1$ of them occur in pairs.  
\end{proposition}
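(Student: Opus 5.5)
The plan is to use the standard dictionary between orientably-regular hypermaps and surface-kernel epimorphisms. An orientably-regular hypermap with orientation-preserving automorphism group $G$ corresponds to a generating triple $(g_1,g_2,g_3)$ of $G$ with $g_1g_2g_3=1$, equivalently a ske $\Phi:\Gamma_{(m_1,m_2,m_3)}\to G$ with $m_i$ the orders of the $g_i$. Two hypermaps are equivalent if and only if the triples differ by an automorphism of $G$; crucially, the order of the three generators is kept (we do not permute them, and we do not quotient by $\mathrm{Aut}(S)$). So the count of hypermaps equals the number of $G$-equivalence classes of skes, taken over all groups of order $2p^2$ and all admissible signatures. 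These numbers were already produced, before any identification of surfaces, in the proofs of Theorems \ref{Tma1}, \ref{Tma2}, \ref{Tma3} and \ref{Tma4}. Lemma \ref{Lema1} guarantees that $\mathbb{D}_{p^2}$ and $\mathbb{Z}_p^2\rtimes\mathbb{Z}_2$ contribute nothing.

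I would first fix a convention: signatures are counted up to the ordering chosen in the paper, and the type of the hypermap is read from the orders. The counts are then as follows.

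\begin{itemize}
\item \emph{Theorem \ref{Tma1}:} there is exactly one class, $\Phi_0$, for $\mathbb{Z}_{2p}\times\mathbb{Z}_p$. By the Remark after Theorem \ref{Tma3} its surface $X$ is $Z_{p-2}$, which gives item (2).
\item \emph{Theorem \ref{Tma2} ($p\geqslant 5$):} one class $\Phi_1$ of type $(2,p,2p)$. Since one period is $2$, this is a map of type $\{p,2p\}$. It is reflexive because $\mathrm{Aut}(Y)$ contains the extension $\Gamma\leqslant\Gamma'$, giving an automorphism of $G$ inverting the generators; equivalently one checks directly that $(c,ab^{-1})\mapsto(c,(ab^{-1})^{-1})$-type inversion extends to an automorphism of $G$.
\item \emph{Theorem \ref{Tma3}:} the $p-1$ pairwise inequivalent skes $\Phi_0,\dots,\Phi_{p-2}$. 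Here $Z_0$ and $Z_{p-2}$ are distinct surfaces, and the remaining $p-3$ skes are paired by Claim 5 onto $\tfrac{p-3}{2}$ surfaces. Altogether this is $1+(p-1)=p$ hypermaps, giving item (1).
\end{itemize}

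For the cyclic group I would read off from the proof of Theorem \ref{Tma4}:
\begin{itemize}
\item one class of type $(2,p^2,2p^2)$, again a map;
\item $p-1$ classes $\Phi_m$ of type $(2p^2,2p^2,p)$, paired via $Y_3\cdot\Phi_m=\Phi_{p-m}$;
\item $p-1$ classes $\Phi_n$ of type $(2p,p^2,2p^2)$;
\item the classes $\Phi_l$ of type $(2p^2,2p^2,p^2)$ with $(l,p)=(l+1,p)=1$, $1\leqslant l\leqslant p^2$. There are $p^2-2p$ such $l$: remove the multiples of $p$ and the $l\equiv -1$ mod $p$. Of these, $l=p^2-2$ is the special one, and the other $p^2-2p-1$ come in pairs.
\end{itemize}
The total for $\mathbb{Z}_{2p^2}$ is $1+2(p-1)+p^2-2p=p^2-1$. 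The grand total is $p+1+p^2-1=p^2+p$.

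The main obstacle is justifying that inequivalence of skes, rather than of surfaces, is exactly hypermap inequivalence, and that each listed ske family is complete and pairwise $G$-inequivalent. The paper asserts this (``it is not difficult to see'') but it must be checked, notably in Theorem \ref{Tma4}(4), where the admissibility condition should be stated as $\gcd(l(l+1),p)=1$. I would also double-check reflexivity of the two maps by exhibiting the inverting automorphism of $G$ explicitly. I would reconcile the reflexivity claim for the cyclic map $(b,a,\cdot)$ via the automorphism $a\mapsto a^{-1}$, and note the mismatch with the table's $(p^2-2p-1)/2$ count as a sanity check.
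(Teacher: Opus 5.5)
Your proposal is correct and follows the paper's own implicit argument. With each signature's ordering fixed as in Lemma \ref{Lema1}, the number of hypermaps is just the number of $\mathrm{Aut}(G)$-classes of skes read off from Theorems \ref{Tma1}--\ref{Tma4}. The surface identifications (Claim 5, the pairings $\Phi_m\leftrightarrow\Phi_{p-m}$ and $l\leftrightarrow l'$, and $X\cong Z_{p-2}$) enter only the ``supported on'' and ``occur in pairs'' clauses.

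Two small corrections:
\begin{itemize}
\item Reflexivity of the $(2,p,2p)$ map does not come from the conformal, non-normal index-$3$ inclusion into $\Gamma'_{(2,3,2p)}$. It comes from the automorphism $a\mapsto a^{-1}$, $b\mapsto b^{-1}$, $c\mapsto c$ that you also give. Alternatively, it follows from uniqueness of the class, since the mirror triple has the same type.
\item There is no actual mismatch with the table, because $p^2-2p=1+2\cdot\tfrac{p^2-2p-1}{2}$.
\end{itemize}
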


{\bf Conflict of Interest and Data Availability.} The authors have no conflict of interest to declare that is relevant to this article. All data generated or analysed during this study are included in this manuscript.

\end{document}